\newlength{\dinwidth}
\newlength{\dinmargin}
\newcommand{\M}{\mathcal{M}}
\newcommand{\R}{\mathbb{R}}
\newcommand{\N}{\mathbb{N}}
\newcommand{\C}{\mathbb{C}}
\newcommand{\Z}{\mathbb{Z}}
\newcommand{\B}{\mathbb{B}}
\newcommand{\Rs}{\mathcal{R}}
\newtheorem{definition}{Definition}[section]
\newtheorem{theorem}{Theorem}[section]
\newtheorem{proposition}{Proposition}[section]
\newtheorem{remark}{Remark}[section]
\newtheorem{lemma}{Lemma}[section]
\newtheorem{example}{Example}[section]
\begin{document}

\def\theequation {\thesection.\arabic{equation}}
\makeatletter\@addtoreset {equation}{section}\makeatother

\title[]{Computation of the topological type of a real Riemann surface} 

\author{C.~Kalla}
\address{	 Centre de recherches math\'ematiques
Universit\'e de Montr\'eal, Case postale 6128, 
Montr\'eal H3C 3J7, Canada}
    \email{kalla@crm.umontreal.ca}

\author{C.~Klein}
\address{Institut de Math\'ematiques de Bourgogne,
		Universit\'e de Bourgogne, 9 avenue Alain Savary, 21078 Dijon
		Cedex, France}
    \email{Christian.Klein@u-bourgogne.fr}

\date{\today}    

\begin{abstract}

We present an algorithm for the computation of the topological type of a real 
compact Riemann surface associated to an algebraic curve, i.e., its genus and the properties 
of the set  of fixed points of the anti-holomorphic involution $\tau$, namely, the number of its connected 
components, and whether this set  divides the surface into one  or 
two connected components. This is achieved by transforming an arbitrary 
canonical homology basis to a homology basis where the $\mathcal{A}$-cycles are invariant under the  anti-holomorphic involution $\tau$.

\end{abstract}

\keywords{}

\thanks{We thank V.~Shramchenko for useful discussions and hints. 
This work has been supported in part by the project FroM-PDE funded by the European
Research Council through the Advanced Investigator Grant Scheme,  and the ANR via the program ANR-09-BLAN-0117-01. }

\maketitle

\section{Introduction}

Riemann surfaces have many applications in physics and mathematics as 
in topological field theories and in the theory of integrable 
partial differential equations (PDEs). In concrete applications such as 
solutions of PDEs, e.g.\ Korteweg-de Vries and nonlinear Schr\"odinger 
(NLS) equations, see e.g.\ \cite{BBEIM} and references therein, 
physical  quantities as for instance the amplitude 
of a water wave are real. Thus 
reality conditions on the solutions are important in practice. The corresponding solutions have 
to be constructed on real Riemann surfaces, i.e., surfaces with an 
anti-holomorphic 
involution $\tau$ acting as the complex conjugation on a local parameter 
on the surface. Regularity conditions for these solutions depend on 
the topological type of the surface, i.e., whether there are 
connected sets of
fixed points of the involution $\tau$, the \emph{real ovals}, and 
whether these ovals separate the surface into two connected components.

It is well known that all compact Riemann surfaces can be realized via
nonsingular algebraic curves in $\mathbb{P}^{2}(\C)$.
F. Klein \cite{Klein} observed that a real Riemann surface can be 
obtained in an analogous way from a nonsingular real plane algebraic curve $\Rs$ with an affine part of the form 
\begin{equation}
   f(x,y)= \sum_{n=1}^{N}\sum_{m=1}^{M}a_{mn}x^{m}y^{n}=0, \qquad     
   x,y\in \mathbb{C},\quad a_{mn}\in \mathbb{R}.
    \label{algcur}
\end{equation}
 The focus of this 
paper is on real compact Riemann surfaces.
For curves of the form (\ref{algcur}) the action of the complex 
conjugation gives rise to an antiholomorphic involution 
$\tau$ defined on $\Rs$ by $\tau(x,y)=(\bar{x},\bar{y})$. The 
set of fixed points of $\tau$ is denoted by $\Rs(\R)$ and is called 
the real part of $\Rs$. The connected components of $\Rs(\R)$ are 
called real ovals. Historically, the first result in the topology of real 
algebraic curves was obtained by Harnack \cite{Harnack}: the 
number $k$ of real ovals of a curve $\Rs$ of genus $g$ satisfies 
$0\leq k \leq g+1$.  In other words, the number of connected 
components of the real part of a real nonsingular plane algebraic 
curve cannot exceed $g+1$.           
Curves with the maximal number of real ovals are called 
\emph{M-curves}. 


The complement $\mathcal{R}\setminus \mathcal{R}(\R)$ has either one 
or two connected components: if $\mathcal{R}\setminus\mathcal{R}(\R)$ 
has two components, the curve $\mathcal{R}$ is called a 
\textit{dividing} curve,  otherwise it is called 
\textit{non-dividing} (notice that an M-curve is always a dividing 
curve). The topological type of $\Rs$ is usually denoted by $(g,k,a)$ 
where $g$ denotes the genus, $k$ the number of real ovals, and $a=0$ 
if the curve is dividing, $a=1$ if it is non-dividing.     This 
implies that the topological type of a  curve without real oval is 
$(g,0,1)$.   
Notice that the first part of Hilbert's 16th problem is concerned 
with the relative configuration of
real ovals of a plane algebraic curve of given degree in 
$\mathbb{P}^{2}(\R)$, i.e., how many ovals can lie in interior of another 
oval. This question has been studied by many 
authors, see, for instance, \cite{Hilbert1, Rohn, Pet, Gud, Viro, 
Arnold} and references therein. However, until now the complete answer 
is known only for curves of degree 7 and less. We will not  discuss 
this topic here, but we would like to mention that in general, a solution to this 
problem, namely, the knowledge of the embedding $\Rs(\R)\subset 
\mathbb{P}^{2}(\R)$, does not provide any  information on the  
embedding $\Rs(\R)\subset \Rs$ which is the subject of the present paper. For instance, it is possible to 
construct two real plane algebraic curves having the same degree and 
the same configuration of ovals, one of them being dividing and the 
other non-dividing (see \cite{Gab} p.~8).

The aim of this paper is twofold: to determine the topological type $(g,k,a)$ 
of a real algebraic curve of the form (\ref{algcur}) with a numerical 
approach, and to transform  periods of the 
holomorphic differentials of the curve to a form where the 
$\mathcal{A}$-periods are real. 
There exist various 
algorithms which give the oval arrangements of a given real algebraic 
curve, see, for instance, \cite{Arnon, Coste, Sak, Feng, Gonz,  Hong, 
Seidel}, all of them  following the same scheme.  But to the 
best of our knowledge, there exists no algorithm that computes the parameter $a$ in the 
topological type $(g,k,a)$ of $\Rs$, which encodes the property of 
the curve to be dividing or not.  The starting point of our algorithm 
is the work 
by Deconinck and van Hoeij who                 
developed an approach to the symbolic-numerical treatment of  
algebraic curves. This approach is distributed as the 
\textit{algcurves} package with Maple, see \cite{deco1,deco2,deho}. A purely numerical approach to real 
hyperelliptic Riemann surfaces was given in \cite{FK1,FK2}, and for general 
Riemann surfaces in \cite{FK}. For a review on computational 
approaches to Riemann surfaces the reader is referred to \cite{Bob}. 

The codes \cite{deho,FK} compute the periods of a Riemann surface in 
a homology basis which is determined by an algorithm due to Tretkoff 
and Tretkoff \cite{tretalg}. This homology basis is in general not 
adapted to possible symmetries of the curve (as the involution $\tau$ 
of real curves). It means that the action on the computed homology basis of any automorphism of the curve  cannot be expressed  in a simple way 
in terms of this basis. 
However, the choice of a basis, where certain cycles are invariant 
under the automorphisms, is 
often convenient in applications. In the context of 
solutions to integrable PDEs on general compact Riemann surfaces as 
for the Kadomtsev-Petviashvili (KP) (see \cite{DN}) and the 
Davey-Stewartson (DS) equations (see \cite{Mal,Kalla}), 
smoothness conditions are formulated conveniently in a homology basis 
adapted to the anti-holomorphic involution $\tau$ defined on the 
surface. For instance, on a real surface there exists a canonical homology basis $(\mathcal{A},\mathcal{B})$ (that we call for 
simplicity \emph{symmetric homology basis} in the following) satisfying the conditions  (see \cite{sesi,Vin})
\begin{equation*}
\left(\begin{matrix}
\tau \mathbf{\mathcal{A}}\\
\tau \mathbf{\mathcal{B}}
\end{matrix}\right)
=
\left(\begin{matrix}
\mathbb{I}_{g}&\,\,0\\
\mathbb{H}\,\,&-\mathbb{I}_{g}
\end{matrix}\right)
\left(\begin{matrix}
\mathbf{\mathcal{A}}\\
\mathbf{\mathcal{B}}
\end{matrix}\right), 
\end{equation*}
where $\mathbb{H}\in\mathcal{M}_{g}(\Z/2\Z)$ is a  $g\times g$ matrix which 
depends on the topological type $(g,k,a)$ (see section 2); here $\mathbb{I}_{g}$ denotes the $g \times g$ unit matrix.

In \cite{KKnum} we studied for the 
first time numerically solutions to integrable equations from  
the family of NLS equations, namely, the multi-component nonlinear Schrödinger equation
and the $(2+1)$-dimensional DS equations. 
A symplectic
transformation of the computed homology basis 
to the symmetric homology basis  
was introduced in \cite{KKnum} and was constructed explicitly for concrete examples. 
In the present work we give in Theorem \ref{Theorem} a complete description of such a symplectic  transformation   
depending  on the topological type of the underlying real algebraic 
curve. The formulas are expressed in terms of period matrices of 
holomorphic differentials on the curve; these holomorphic 
differentials must satisfy the condition 
$\overline{\tau^{*}\nu_{j}}=\nu_{j}$, $j=1,\ldots,g$, where  
$\tau^{*}$ is the action of $\tau$ lifted to the space of 
holomorphic differentials.   This allows to give an 
algorithm to construct explicitly a symplectic transformation of an 
arbitrary canonical homology basis for a real Riemann surface to the 
symmetric form. The algorithm permits to systematically study smooth 
real solutions on general real Riemann surfaces for equations like 
KP and DS starting from a representation of the surface via an algebraic 
curve which so far was only possible for hyperelliptic surfaces. 
In addition, this provides a numerical way to compute the 
topological type of a real Riemann surface for given periods of the 
holomorphic differentials.

The paper is organized as follows. In section 2 we introduce a 
symmetric homology basis which depends on the topological type of 
$\Rs$. In section 3 we give explicitly the symplectic transformation 
between the computed homology basis and the symmetric homology basis. 
This result will be used in section 4 to construct an algorithm which 
gives the topological type $(g,k,a)$ of a real algebraic curve for 
given periods of the holomorphic differentials. In 
section 5 we discuss examples of real curves for higher genus. Some 
concluding remarks are added in section 6.

\section{Symmetric homology basis}

In what follows $\Rs$ denotes a compact Riemann surface of genus $g>0$.
A homology basis $(\mathcal{A},\mathcal{B}):=(\mathcal{A}_{1},\ldots,\mathcal{A}_{g},\mathcal{B}_{1},\ldots,\mathcal{B}_{g})$  with the following intersection indices  
\[\mathcal{A}_{i}\circ\mathcal{B}_{j} = \delta_{ij} \qquad \quad\mathcal{A}_{i}\circ\mathcal{A}_{j}=\mathcal{B}_{i}\circ\mathcal{B}_{j}=0,\]
is called a canonical basis of cycles. 
With $\mathcal{A}$ (resp. $\mathcal{B}$) we denote the vector $(\mathcal{A}_{1},\ldots,\mathcal{A}_{g})^{t}$ (resp. $(\mathcal{B}_{1},\ldots,\mathcal{B}_{g})^{t}$).
Canonical homology bases are related via a  symplectic transformation. 
Let $(\mathbf{\mathcal{A}},\mathbf{\mathcal{B}})$ and 
$(\mathbf{\tilde{\mathcal{A}}},\mathbf{\tilde{\mathcal{B}}})$ be  arbitrary canonical homology bases  on $\Rs$. Then there exists a symplectic matrix $\left(\begin{matrix}
A&B\\
C&D
\end{matrix}\right)\in Sp(2g,\Z)$ such that
\begin{equation}
\left(\begin{matrix}
A&B\\
C&D
\end{matrix} \right)
\left(\begin{matrix}
\mathbf{\tilde{\mathcal{A}}}\\
\mathbf{\tilde{\mathcal{B}}}
\end{matrix} \right) 
=
\left(\begin{matrix}
\mathbf{\mathcal{A}}\\
\mathbf{\mathcal{B}}
\end{matrix} \right).  \label{transf Vinn}
\end{equation}
Recall that a symplectic matrix $M\in Sp(2g,\Z)$
 satisfies $M^{t}J_{g}M=J_{g}$, with the matrix $J_{g}$ given by
$J_{g}=\left(\begin{matrix}
0&\,\,\mathbb{I}_{g}\\
-\mathbb{I}_{g}\,\,&0
\end{matrix}\right),$ 
where $\mathbb{I}_{g}$ denotes the $g \times g$ unit matrix. 
Symplectic matrices $M=\left(\begin{matrix}
A&B\\
C&D
\end{matrix}\right)\in Sp(2g,\Z)$ are characterized by the following system:
\begin{align}
A^{t}D-&C^{t}B=\mathbb{I}_{g},  \label{Symp1}\\
A^{t}C&=C^{t}A,  \label{Symp2}\\
D^{t}B&=B^{t}D.\label{Symp3}
\end{align}
Moreover, the inverse matrix $M^{-1}$ is given by 
\begin{equation}
M^{-1}=\left(\begin{matrix}
\,\,\,\,D^{t}&-B^{t}\,\\
-C^{t}&\,\,\,A^{t}
\end{matrix}\right).  \label{inverse}
\end{equation}

Now let $\tau$ be an anti-holomorphic involution defined on 
$\mathcal{R}$. 
Recall that $(g,k,a)$ denotes the topological type of $\Rs$, where 
$k$ is the number of connected components of $\mathcal{R}(\R)$ (the 
set of fixed points of $\tau$), and  $a=0$ if the curve is dividing 
(i.e., if $\mathcal{R}\setminus\mathcal{R}(\R)$ 
has two components),  $a=1$ if it is non-dividing (i.e., if $\mathcal{R}\setminus\mathcal{R}(\R)$ 
has just one component). A curve with the topological type $(g,g+1,0)$ is called an M-curve. 
According to Proposition 2.2 in  Vinnikov's paper 
\cite{Vin}, there exists a canonical homology basis 
$(\mathcal{A},\mathcal{B})$ (called for 
simplicity
\emph{symmetric homology basis}) such that
\begin{equation}
\left(\begin{matrix}
\tau \mathbf{\mathcal{A}}\\
\tau \mathbf{\mathcal{B}}
\end{matrix}\right)
=
\left(\begin{matrix}
\mathbb{I}_{g}&\,\,0\\
\mathbb{H}\,\,&-\mathbb{I}_{g}
\end{matrix}\right)
\left(\begin{matrix}
\mathbf{\mathcal{A}}\\
\mathbf{\mathcal{B}}
\end{matrix}\right), \label{hom basis}
\end{equation}
where $\mathbb{H}$ is a  block diagonal $g\times g$ matrix which depends on the topological type $(g,k,a)$ of $\Rs$ and is defined as follows:
\\\\
- if $k>0$ and  $a=0$,
\[\mathbb{H}={\left(\begin{matrix}
0&1&&&&&&\\
1&0&&&&&&\\
&&\ddots&&&&&\\
&&&0&1&&&\\
&&&1&0&&&\\
&&&&&0&&\\
&&&&&&\ddots&\\
&&&&&&&0
\end{matrix}\right)},\]
- if $k>0$ and  $a=1$,
\[\mathbb{H}={\left(\begin{matrix}
1&&&&&\\
&\ddots&&&&\\
&&1&&&\\
&&&0&&\\
&&&&\ddots&\\
&&&&&0
\end{matrix}\right)};\]
rank$(\mathbb{H})=g+1-k$ in both cases;
\\\\
- if $k=0$, 
\[\mathbb{H}={\left(\begin{matrix}
0&1&&&\\
1&0&&&\\
&&\ddots&&\\
&&&0&1\\
&&&1&0
\end{matrix}\right)}
\quad \text{for even $g$ or} \quad 
\mathbb{H}={\left(\begin{matrix}
0&1&&&&\\
1&0&&&&\\
&&\ddots&&&\\
&&&0&1&\\
&&&1&0&\\
&&&&&0
\end{matrix}\right)} \mbox{  for odd $g$};\]
rank$(\mathbb{H})=g$ if $g$ is even, rank$(\mathbb{H})=g-1$ if $g$ is odd.

\begin{remark}
\emph{For given information whether there are real ovals ($k>0$) or 
    not ($k=0$), the matrix $\mathbb{H}$ completely encodes the 
    topological type of the real Riemann surface. In \cite{sesi} a 
    different, but equivalent form of $\mathbb{H}$ was used, which 
    has ones only in the antidiagonal, if the rank is equal to the 
    genus, or in a parallel to the antidiagonal for smaller rank. }
\end{remark}

\begin{example}
Consider the hyperelliptic curve of genus $g$ defined by the equation
\begin{equation}
y^{2}= \prod_{i=1}^{2g+2} (x-x_{i}), \label{hyp real1}
\end{equation}
where the branch points $x_{i}\in\R$ are ordered such that 
$x_{1}<\ldots<x_{2g+2}$. On such a curve, we can define 
two anti-holomorphic involutions $\tau_{1}$ and $\tau_{2}$, given 
respectively by 
$\tau_{1}(x,y)=(\overline{x},\overline{y})$ and 
$\tau_{2}(x,y)=(\overline{x},-\overline{y})$. 
Projections of real ovals of $\tau_{1}$  on the $x$-plane 
coincide with  the intervals 
$[x_{2g+2},x_{1}],\ldots,[x_{2g},x_{2g+1}]$, 
whereas projections of real ovals of $\tau_{2}$ on the 
$x$-plane coincide with  the intervals 
$[x_{1},x_{2}],\ldots,[x_{2g+1},x_{2g+2}]$. 
Hence the curve (\ref{hyp real1}) is an M-curve with respect to both 
anti-involutions $\tau_{1}$ and $\tau_{2}$.                

If all $x_{i}$ are non-real and pairwise conjugate, the curve 
has no real ovals with respect to the involution $\tau_{2}$; it is 
dividing for the involution $\tau_{1}$.
\label{example rea hyp}
\end{example}

\section{Symplectic transformation between homology bases}

In this section we construct a symplectic transformation between an arbitrary homology basis 
$(\mathcal{\tilde{A}},\mathcal{\tilde{B}})$ and a symmetric homology basis on a real Riemann surface $(\mathcal{R},\tau)$, where $\tau$ denotes the anti-holomorphic involution defined on $\mathcal{R}$. 
The main result of this paper is contained in Theorem \ref{Theorem} which gives the underlying symplectic matrix $\left(\begin{matrix}
A&B\\
C&D
\end{matrix}\right)$ in terms of the period matrices of holomorphic differentials satisfying the condition (\ref{diff mu}) in the homology basis $(\mathcal{\tilde{A}},\mathcal{\tilde{B}})$.  The key ingredient in this context is the description of the action of $\tau$ on the cycles  $(\mathcal{\tilde{A}},\mathcal{\tilde{B}})$,  given in Proposition  \ref{proposition R} by the integer matrix $\mathbf{R}$. Then Theorem \ref{Theorem} states that the column vectors of the matrix $\left(\begin{matrix}
A&B
\end{matrix}\right)^{t}$  form in fact a \textit{$\Z$-basis} of the integer kernel of the matrix $\mathbf{R}^{t}-\mathbb{I}_{2g}$. The matrix $Q$ in Theorem \ref{Theorem} encodes the degree of freedom in the choice of such a $\Z$-basis. For the ease of the reader, we start recalling some basic facts from the theory of \textit{$\Z$-modules} used to prove Theorem \ref{Theorem}, which differs from the usual linear algebra over vector spaces.

\subsection{Basic facts from the theory of $\Z$-modules}
\label{module}

A \textit{$\Z$-module} or more generally a \textit{$\mathbb{A}$-module} where $\mathbb{A}$ denotes a commutative ring, is a natural generalization of vector spaces where the usual scalar field is replaced by the ring $\mathbb{A}$. For a review on the subject we refer to \cite{Lang}. In what follows we assume that $\mathbb{A}$ is the principal ring $\Z$ and we denote by $\mathcal{M}$ a $\Z$-module.

\begin{definition} 
\begin{enumerate}
\item[]
	\item[i.] $\mathcal{M}$ is of \textit{finite type} if it admits a finite set of generators. 
	\item[ii.] $\mathcal{M}$ is said to be free if there exists a $\Z$-basis, namely, 
a set $\{x_{i}\}_{i\in I}\subset \mathcal{M}$ with $I \subset \N$ such that any element $x\in\mathcal{M}$ can be written uniquely as $x=\sum_{i\in I}\alpha_{i}\,x_{i}$ where the scalars $\alpha_{i}\in\Z$ are non-zero only for a finite number of them.
\end{enumerate}
\end{definition}

The following theorems provide important results in the general theory of modules over a principal ring.

\begin{theorem}
\begin{enumerate}
\item[]
	\item[i.] If $\mathcal{M}$ is free and of finite type then all $\Z$-bases of $\mathcal{M}$ are finite with the same cardinality called the $rank$ of $\mathcal{M}$. 
\item[ii.] Since $\Z$ is integral, the rank of $\mathcal{M}$ equals the dimension of the $\mathbb{Q}$-vector space $\mathbb{S}^{-1}\mathcal{M}$ where $\mathbb{S}=\Z\setminus\{0\}$.  
\item[iii.] A submodule of a free $\Z$-module of finite type of rank $n$ is a free $\Z$-module of finite type of rank $r\leq n$. 
\end{enumerate}\label{theo1}
\end{theorem}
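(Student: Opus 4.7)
The plan is to handle the three assertions of Theorem \ref{theo1} in order, using the standard toolkit for modules over a principal ideal domain. The overarching strategy is to reduce the uniqueness of rank (parts (i) and (ii)) to the well-definedness of dimension for vector spaces over a field, and to prove (iii) by induction on $n$ using projection onto a coordinate.

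For (i), I would fix a finite $\Z$-basis $\{x_1,\ldots,x_n\}$ of $\mathcal{M}$ and another, a priori possibly infinite, $\Z$-basis $\{y_j\}_{j\in J}$. Choosing any prime $p$, I would pass to the quotient $\mathcal{M}/p\mathcal{M}$, which becomes a vector space over the field $\mathbb{F}_p=\Z/p\Z$. Using the universal property characterizing a $\Z$-basis, the reductions $\{\bar{x}_i\}$ and $\{\bar{y}_j\}$ are both $\mathbb{F}_p$-bases of this quotient. Since vector space dimension is well defined, $|J|=n$; in particular $J$ is finite. This fixes the common cardinality and defines the rank.

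For (ii), I would exploit the natural isomorphism $\mathbb{S}^{-1}\mathcal{M}\cong \mathcal{M}\otimes_{\Z}\mathbb{Q}$, which is available because $\Z$ is a domain so that $\mathbb{S}=\Z\setminus\{0\}$ is a multiplicative set. Starting from a $\Z$-basis $\{x_1,\ldots,x_n\}$ of $\mathcal{M}$, the family $\{x_i/1\}$ clearly spans $\mathbb{S}^{-1}\mathcal{M}$ over $\mathbb{Q}$ after clearing denominators. For $\mathbb{Q}$-linear independence, any relation $\sum (a_i/s_i)(x_i/1)=0$ becomes, after multiplying by a common denominator, a $\Z$-linear relation among the $x_i$, which forces every $a_i$ to vanish. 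Hence $\dim_{\mathbb{Q}}\mathbb{S}^{-1}\mathcal{M}=n$.

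For (iii), I would argue by induction on $n$. Identifying $\mathcal{M}$ with $\Z^n$ via a chosen basis, let $\pi\colon \Z^n\to\Z$ be the projection onto the last coordinate and let $\mathcal{N}\subseteq \Z^n$ be a submodule. Its image $\pi(\mathcal{N})$ is a subgroup of $\Z$ and hence equals $d\Z$ for some integer $d\geq 0$. If $d=0$ then $\mathcal{N}\subseteq \ker\pi\cong\Z^{n-1}$ and the inductive hypothesis applies directly. If $d>0$, pick $y\in\mathcal{N}$ with $\pi(y)=d$; then every $z\in\mathcal{N}$ can be written uniquely as $z=(z-\alpha y)+\alpha y$ with $\alpha\in\Z$ determined by $\pi(z)=\alpha d$, and $z-\alpha y\in\mathcal{N}\cap\ker\pi$. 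This yields a direct sum decomposition $\mathcal{N}=(\mathcal{N}\cap\ker\pi)\oplus\Z y$. Applying the induction hypothesis to $\mathcal{N}\cap\ker\pi\subseteq\Z^{n-1}$ produces a finite $\Z$-basis of rank $\leq n-1$, which together with $y$ provides a $\Z$-basis of $\mathcal{N}$ of cardinality $r\leq n$. The main subtlety is really in (i): one must rule out at the outset that some $\Z$-basis of $\mathcal{M}$ is infinite, and the reduction modulo $p$ handles this cleanly because a finitely generated $\mathbb{F}_p$-vector space cannot admit an infinite basis.
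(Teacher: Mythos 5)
The paper does not actually prove Theorem~\ref{theo1}: it is stated as a standard result from the theory of modules over a principal ring, with the reader referred to Lang's \emph{Algebra} for background. So there is no in-paper argument to compare against; your proposal supplies a proof from scratch. That said, your argument is correct and is essentially the standard textbook route. For~(i), reducing mod a prime $p$ is a clean way to transfer the well-definedness of cardinality from $\mathbb{F}_p$-vector spaces to free $\Z$-modules; you do need the observation (which you make) that $\mathcal{M}/p\mathcal{M}$ is finitely generated over $\mathbb{F}_p$ so that an infinite basis is ruled out, and it is worth noting that the linear independence of the reduced $\bar{y}_j$ uses the uniqueness of coordinates in a $\Z$-basis. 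For~(ii), the identification $\mathbb{S}^{-1}\mathcal{M}\cong\mathcal{M}\otimes_{\Z}\mathbb{Q}$ and the clearing-denominators argument are fine; the one step worth flagging explicitly is that passing from $\sum c_i x_i/1=0$ in the localization back to $\sum c_i x_i=0$ in $\mathcal{M}$ uses that a free module is torsion-free, so that $m/1=0$ forces $m=0$. For~(iii), the induction on $n$ with the projection $\pi$ onto the last coordinate, the dichotomy $\pi(\mathcal{N})=0$ or $\pi(\mathcal{N})=d\Z$ with $d>0$, and the direct-sum splitting $\mathcal{N}=(\mathcal{N}\cap\ker\pi)\oplus\Z y$ is exactly the classical proof found in Lang and elsewhere. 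One could alternatively deduce (i) and (ii) simultaneously by tensoring with $\mathbb{Q}$, or prove (iii) via the Smith normal form that the paper introduces two paragraphs later; your version is more elementary and self-contained, which is a reasonable trade-off.
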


The example we will need in this paper are
submodules of $\Z^{n}$ which are because of the Theorem~\ref{theo1} 
free modules of finite type with rank $r\leq n$.

\begin{remark} \emph{Contrary to the case of vector spaces, if 
    $\mathcal{N}$ is a free submodule of the free module 
    $\mathcal{M}$ with the same rank, this does not imply that 
    $\mathcal{N}$ equals $\mathcal{M}$. Consider the submodule $2\Z$ 
    of $\Z$ as an example.} \label{remark module}
\end{remark}

The following theorem, also called \textit{the theorem of the adapted basis}, gives the classification of modules over the  principal ring $\Z$:

\begin{theorem}
Let $\mathcal{M}$ be a free $\Z$-module of rank $n$ and let $\mathcal{N}$ be a submodule of $\mathcal{M}$. Then there exists a basis $(e_{1},\ldots,e_{n})$ of $\mathcal{M}$ and unique non-zero integers $(p_{1},\ldots,p_{r})$ (with $r\leq n$) such that 
\begin{enumerate}
	\item $(p_{1}e_{1},\ldots,p_{r}e_{r})$ is a $\Z$-basis of $\mathcal{N}$ \vspace{0.1cm}
	\item $p_{1}|p_{2}|\ldots|p_{r}$ (this 
means that $\forall i$, $p_{i}$ divides all $p_{j}$ with $j>i$).
\end{enumerate}  \label{adapt basis}
\end{theorem}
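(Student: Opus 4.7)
The plan is to establish existence by induction on the rank $n$ of $\mathcal{M}$, and uniqueness by identifying the $p_i$ as intrinsic invariants of the quotient module $\mathcal{M}/\mathcal{N}$.

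For existence, after disposing of the trivial cases $n=0$ and $\mathcal{N}=\{0\}$, I would consider the collection of ideals $\phi(\mathcal{N})\subset\Z$ as $\phi$ ranges over all linear forms $\phi\colon\mathcal{M}\to\Z$. Since $\Z$ is a principal ideal domain each such ideal has a unique non-negative generator; I then pick a form $\phi_{1}$ whose generator $p_{1}>0$ is minimal, together with $e_{1}'\in\mathcal{N}$ realising $\phi_{1}(e_{1}')=p_{1}$. The technical heart of the proof, and what I expect to be the main obstacle, is the divisibility lemma: $p_{1}$ divides $\phi(e_{1}')$ for \emph{every} linear form $\phi$. Otherwise a Bezout combination $a\phi_{1}+b\phi$ would take the value $\gcd(p_{1},\phi(e_{1}'))<p_{1}$ at $e_{1}'\in\mathcal{N}$, contradicting the minimality of $p_{1}$. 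Specialising $\phi$ to the coordinate forms of an arbitrary initial basis of $\mathcal{M}$ shows that $e_{1}'=p_{1}\,e_{1}$ for some $e_{1}\in\mathcal{M}$.

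Next I would split both modules. Since $\phi_{1}(e_{1})=1$, one has $\mathcal{M}=\Z e_{1}\oplus\ker\phi_{1}$, and because $\phi_{1}(x)\in p_{1}\Z$ for every $x\in\mathcal{N}$, this refines to $\mathcal{N}=\Z(p_{1}e_{1})\oplus\mathcal{N}'$ with $\mathcal{N}'=\mathcal{N}\cap\ker\phi_{1}$. By Theorem \ref{theo1}, $\ker\phi_{1}$ is free of rank $n-1$, so the inductive hypothesis applied to $\mathcal{N}'\subset\ker\phi_{1}$ yields a basis $(e_{2},\dots,e_{n})$ and integers $p_{2}\mid p_{3}\mid\cdots\mid p_{r}$ such that $(p_{2}e_{2},\dots,p_{r}e_{r})$ is a $\Z$-basis of $\mathcal{N}'$; concatenation delivers an adapted basis of $\mathcal{M}$. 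The divisibility $p_{1}\mid p_{2}$ follows by applying the same extremality principle once more: the coordinate form $e_{1}^{\ast}+e_{2}^{\ast}$ relative to the constructed basis sends $p_{1}e_{1}$ and $p_{2}e_{2}$ into $\Z$, so $\gcd(p_{1},p_{2})$ belongs to its image on $\mathcal{N}$, whence $p_{1}\mid p_{2}$.

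For uniqueness, I would pass to the quotient
\[
\mathcal{M}/\mathcal{N}\;\cong\;\Z/p_{1}\Z\oplus\cdots\oplus\Z/p_{r}\Z\oplus\Z^{n-r},
\]
and recover the $p_{i}$ as invariants of this quotient, either as the invariant factors of its torsion part or, equivalently, through the determinantal characterisation $p_{1}p_{2}\cdots p_{k}=\gcd$ of all $k\times k$ minors of any integer matrix representing the inclusion $\mathcal{N}\hookrightarrow\mathcal{M}$. Both descriptions depend only on the pair $(\mathcal{M},\mathcal{N})$, so the $p_{i}$ are unique. Once the divisibility lemma from the second paragraph is secured the remainder is essentially bookkeeping; the real work is the selection of $p_{1}$ through the extremal linear form and the verification that it controls \emph{all} linear forms simultaneously.
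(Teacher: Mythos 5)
The paper states this theorem without proof, citing Lang's \emph{Algebra}; it is the classical ``theorem of the adapted basis'' for submodules of a free module over the principal ring $\Z$, and there is no internal argument to compare against. Your proposal is the standard textbook argument and it is correct. The inductive construction via an extremal linear form $\phi_{1}$ whose image $\phi_{1}(\mathcal{N})$ has smallest positive generator $p_{1}$, the Bezout argument establishing that $p_{1}$ divides $\phi(e_{1}')$ for \emph{every} linear form $\phi$ (hence that $e_{1}'=p_{1}e_{1}$ for some $e_{1}\in\mathcal{M}$), the splitting $\mathcal{M}=\Z e_{1}\oplus\ker\phi_{1}$ inducing $\mathcal{N}=\Z(p_{1}e_{1})\oplus(\mathcal{N}\cap\ker\phi_{1})$, and the recovery of $p_{1}\mid p_{2}$ by evaluating $e_{1}^{*}+e_{2}^{*}$ on $\mathcal{N}$ and invoking the extremality of $p_{1}$ once more, are all sound. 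For uniqueness, of your two suggested routes the determinantal one --- $p_{1}\cdots p_{k}$ equals the gcd of all $k\times k$ minors of any matrix representing the inclusion $\mathcal{N}\hookrightarrow\mathcal{M}$, a quantity invariant under unimodular row and column operations --- is the more self-contained, since it avoids presupposing the uniqueness of invariant factors of the quotient $\mathcal{M}/\mathcal{N}$. One cosmetic remark: uniqueness of the $p_{i}$ requires normalising them to be positive (as your construction does automatically), since replacing $e_{i}$ by $-e_{i}$ changes $p_{i}$ to $-p_{i}$; the theorem's phrase ``unique non-zero integers'' implicitly presupposes this normalisation.
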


In what follows $GL_{n}(\Z)$ denotes the set of $n\times n$ 
invertible matrices over $\Z$; it is well known that the determinant of these matrices equals $\pm 1$. Notice that if $M$ denotes the matrix formed by the column vectors of a $\Z$-basis of a $\Z$-module of rank $n$, then other bases are given by the column vectors of matrices of the form $MQ$ with $Q\in GL_{n}(\Z)$. Therefore, Theorem \ref{adapt basis} has the following matrix interpretation that we will use in section \ref{matrix}: for any non-zero $m \times n$ matrix $M\in \mathcal{M}_{m,n}(\Z)$ of rank $r$, there 
exist matrices $U\in GL_{m}(\Z)$, $V\in GL_{n}(\Z)$ such that
\begin{equation}
UMV=\left(\begin{matrix}
\text{Diag}(p_{1},\ldots,p_{r})& 0\\
0&0
\end{matrix}\right),   \label{Smith}
\end{equation}
where  
$p_{i}\in\Z\setminus\{0\}$ satisfy $p_{1}|p_{2}|\ldots|p_{r}$  
and where $\text{Diag}(.)$ denotes the diagonal matrix. This is 
called the \emph{Smith normal 
form} of $M$. In section \ref{algo} we will use a well known 
algorithm to compute the Smith normal form.
In particular, this algorithm provides a $\Z$-basis of the integer kernel of $M$ given by the last $n-r$ column vectors of the matrix $V$ in (\ref{Smith}).

\subsection{Action of $\tau$ on an arbitrary homology basis}

We denote by  $(\mathbf{\mathcal{A}},\mathbf{\mathcal{B}})$ a 
symmetric homology basis  (i.e.\ which satisfies (\ref{hom basis})). 
Let $(\nu_{1},\ldots,\nu_{g})$ be a basis of holomorphic differentials such that
\begin{equation}
\overline{\tau^{*}\nu_{j}}=\nu_{j}, \qquad j=1,\ldots,g,\label{diff mu}
\end{equation}
where $\tau^{*}$ is the action of $\tau$ lifted to the space of 
holomorphic differentials: $\tau^* \omega(p) = \omega(\tau p)$ for 
any $p\in \Rs$. 
The matrices $P_{\mathcal{A}}$ and $P_{\mathcal{B}}$ defined by
\begin{equation}
(P_{\mathcal{A}})_{ij}=\int_{\mathcal{A}_{i}}\nu_{j}, \qquad (P_{\mathcal{B}})_{ij}=\int_{\mathcal{B}_{i}}\nu_{j},  \qquad i,j=1,\ldots,g  \label{matrix P}
\end{equation}
are called the matrices of $\mathcal{A}$- and $\mathcal{B}$-periods of the differentials $\nu_{j}$. 
From (\ref{hom basis}) and (\ref{diff mu}) we deduce the action of the complex conjugation on the matrices $P_{\mathcal{A}}$ and  $P_{\mathcal{B}}$:
\begin{equation}
(P_{\mathcal{A}})_{ij}\in\R,  \label{PerA Vin}
\end{equation}
\begin{equation}
\overline{P_{\mathcal{B}}}=-P_{\mathcal{B}}+\mathbb{H}P_{\mathcal{A}}. \label{PerB Vin}
\end{equation}

Denote by 
$(\mathbf{\tilde{\mathcal{A}}},\mathbf{\tilde{\mathcal{B}}})$ an 
arbitrary
homology basis. 
From the symplectic transformation (\ref{transf Vinn}) we obtain the 
following transformation law between the matrices 
$P_{\tilde{\mathcal{A}}},P_{\tilde{\mathcal{B}}}$ and $P_{\mathcal{A}},P_{\mathcal{B}}$ defined in (\ref{matrix P}):
\begin{equation}
\left(\begin{matrix}
A&B\\
C&D
\end{matrix}\right)
\left(\begin{matrix}
P_{\tilde{\mathcal{A}}}\\
P_{\tilde{\mathcal{B}}}
\end{matrix}\right) 
=
\left(\begin{matrix}
P_{\mathcal{A}}\\
P_{\mathcal{B}}
\end{matrix}\right).  \label{transf per}
\end{equation}
Therefore, by (\ref{PerA Vin}) one gets
\begin{align}
A \,\text{Re}\left(P_{\tilde{\mathcal{A}}}\right)+B\, \text{Re}\left(P_{\tilde{\mathcal{B}}}\right)&=P_{\mathcal{A}}  \label{A,B Re}\\
A \,\text{Im}\left(P_{\tilde{\mathcal{A}}}\right)+B \,\text{Im}\left(P_{\tilde{\mathcal{B}}}\right)&=0,\label{A,B Im}
\end{align}
and by (\ref{PerB Vin})
\begin{align}
C \,\text{Re}\left(P_{\tilde{\mathcal{A}}}\right)+D\, \text{Re}\left(P_{\tilde{\mathcal{B}}}\right)&=\frac{1}{2}\,\mathbb{H}P_{\mathcal{A}}  \label{C,D Re}\\
C \,\text{Im}\left(P_{\tilde{\mathcal{A}}}\right)+D \,\text{Im}\left(P_{\tilde{\mathcal{B}}}\right)&=\text{Im}\left(P_{\mathcal{B}}\right).\label{C,D Im}
\end{align}

From (\ref{A,B Re}) and (\ref{C,D Im}) it can be checked that the matrices $A 
\,\text{Re}\left(P_{\tilde{\mathcal{A}}}\right)+B\, 
\text{Re}\left(P_{\tilde{\mathcal{B}}}\right)$ and  $C 
\,\text{Im}\left(P_{\tilde{\mathcal{A}}}\right)+D 
\,\text{Im}\left(P_{\tilde{\mathcal{B}}}\right)$ are invertible (the 
first because $P_{\mathcal{A}}$ is, for the second see \cite{KKnum} for more details).  
The following Lemma proved in \cite{KKnum} shows that it is 
sufficient to know the pairs of matrices $A,B$ or $C,D$ to get the 
full symplectic transformation (\ref{transf per}):
\begin{lemma}
The matrices $A,B,C,D\in\M_{g}(\Z)$ solving (\ref{A,B Re})-(\ref{C,D Im}) satisfy:
\begin{align}
A^{t}&=\text{Im}\left(P_{\tilde{\mathcal{B}}}\right)\left[C 
\,\text{Im}\left(P_{\tilde{\mathcal{A}}}\right)+D \,\text{Im}\left(P_{\tilde{\mathcal{B}}}\right)\right]^{-1} \label{mA}\\
B^{t}&=-\text{Im}\left(P_{\tilde{\mathcal{A}}}\right)\left[C 
\,\text{Im}\left(P_{\tilde{\mathcal{A}}}\right)+D \,\text{Im}\left(P_{\tilde{\mathcal{B}}}\right)\right]^{-1} \label{mB}\\
C^{t}&=\frac{1}{2}\,A^{t}\mathbb{H}-\text{Re}\left(P_{\tilde{\mathcal{B}}}\right)\left[A \,\text{Re}\left(P_{\tilde{\mathcal{A}}}\right)+B\, \text{Re}\left(P_{\tilde{\mathcal{B}}}\right)\right]^{-1}  \label{mC}\\
D^{t}&=\frac{1}{2}\,B^{t}\mathbb{H}+\text{Re}\left(P_{\tilde{\mathcal{A}}}\right)\left[A \,\text{Re}\left(P_{\tilde{\mathcal{A}}}\right)+B\, \text{Re}\left(P_{\tilde{\mathcal{B}}}\right)\right]^{-1}.  \label{mD}
\end{align}  
\end{lemma}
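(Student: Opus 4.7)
The plan is to exploit the symplectic structure of the transformation matrix $M=\begin{pmatrix}A&B\\C&D\end{pmatrix}$ rather than attempt to solve the linear system (\ref{A,B Re})--(\ref{C,D Im}) head-on. The first step is to apply $M^{-1}$, given explicitly by (\ref{inverse}), to both sides of (\ref{transf per}) to obtain the dual relations
\begin{equation*}
P_{\tilde{\mathcal{A}}}=D^{t}P_{\mathcal{A}}-B^{t}P_{\mathcal{B}},\qquad P_{\tilde{\mathcal{B}}}=-C^{t}P_{\mathcal{A}}+A^{t}P_{\mathcal{B}}.
\end{equation*}
Since $P_{\mathcal{A}}$ is real by (\ref{PerA Vin}), taking imaginary parts immediately yields
\begin{equation*}
\text{Im}(P_{\tilde{\mathcal{A}}})=-B^{t}\,\text{Im}(P_{\mathcal{B}}),\qquad \text{Im}(P_{\tilde{\mathcal{B}}})=A^{t}\,\text{Im}(P_{\mathcal{B}}).
\end{equation*}

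The next step is to invoke (\ref{C,D Im}), which identifies $\text{Im}(P_{\mathcal{B}})$ with $C\,\text{Im}(P_{\tilde{\mathcal{A}}})+D\,\text{Im}(P_{\tilde{\mathcal{B}}})$. The paragraph preceding the lemma asserts the invertibility of the latter matrix, so right-multiplication of the two imaginary-part identities above by its inverse produces (\ref{mA}) and (\ref{mB}) directly.

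For the real-part identities (\ref{mC}) and (\ref{mD}), I would first extract from (\ref{PerB Vin}) the equality $\text{Re}(P_{\mathcal{B}})=\tfrac{1}{2}\mathbb{H}P_{\mathcal{A}}$, which follows by taking real parts of $\overline{P_{\mathcal{B}}}=-P_{\mathcal{B}}+\mathbb{H}P_{\mathcal{A}}$. Substituting this into the real parts of the expressions for $P_{\tilde{\mathcal{A}}}$ and $P_{\tilde{\mathcal{B}}}$ obtained in the first step gives
\begin{equation*}
\text{Re}(P_{\tilde{\mathcal{A}}})=\bigl(D^{t}-\tfrac{1}{2}B^{t}\mathbb{H}\bigr)P_{\mathcal{A}},\qquad \text{Re}(P_{\tilde{\mathcal{B}}})=\bigl(-C^{t}+\tfrac{1}{2}A^{t}\mathbb{H}\bigr)P_{\mathcal{A}}.
\end{equation*}
Equation (\ref{A,B Re}) identifies the invertible combination $A\,\text{Re}(P_{\tilde{\mathcal{A}}})+B\,\text{Re}(P_{\tilde{\mathcal{B}}})$ with $P_{\mathcal{A}}$; right-multiplying the two relations above by its inverse and rearranging then yields (\ref{mD}) and (\ref{mC}) respectively.

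The argument is purely algebraic and I expect no substantive obstacle: the two key inputs, namely the explicit form (\ref{inverse}) of $M^{-1}$ and the reality constraints (\ref{PerA Vin})--(\ref{PerB Vin}), are both provided in the excerpt, and both required invertibility statements are granted. It is worth observing that the remaining equations (\ref{A,B Im}) and (\ref{C,D Re}) are not actually needed to establish (\ref{mA})--(\ref{mD}); they correspond to the symplectic conditions (\ref{Symp2})--(\ref{Symp3}) and serve only as consistency checks.
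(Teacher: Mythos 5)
Your argument is correct. The key step---applying the explicit inverse~(\ref{inverse}) to~(\ref{transf per}) to obtain $P_{\tilde{\mathcal{A}}}=D^{t}P_{\mathcal{A}}-B^{t}P_{\mathcal{B}}$ and $P_{\tilde{\mathcal{B}}}=-C^{t}P_{\mathcal{A}}+A^{t}P_{\mathcal{B}}$, then splitting into real and imaginary parts using~(\ref{PerA Vin}) and $\text{Re}(P_{\mathcal{B}})=\tfrac{1}{2}\mathbb{H}P_{\mathcal{A}}$, and finally eliminating $\text{Im}(P_{\mathcal{B}})$ via~(\ref{C,D Im}) and $P_{\mathcal{A}}$ via~(\ref{A,B Re})---yields exactly~(\ref{mA})--(\ref{mD}), and the invertibility facts you invoke are stated in the text just before the lemma. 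Note, however, that the paper itself does not prove this lemma; it delegates the proof entirely to reference~\cite{KKnum}, so there is no internal proof to compare against. Your closing observation that~(\ref{A,B Im}) and~(\ref{C,D Re}) are redundant is essentially right but slightly loosely phrased: they do not \emph{coincide} with the symplectic conditions~(\ref{Symp2})--(\ref{Symp3}); rather they follow from~(\ref{A,B Re}),~(\ref{C,D Im}), the reality constraints, and the symplectic identities $AB^{t}=BA^{t}$, $CD^{t}=DC^{t}$, $DA^{t}-CB^{t}=\mathbb{I}_{g}$. As phrased this is a harmless imprecision in an aside, not a gap in the proof.
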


The action of $\tau$ on an arbitrary homology basis $(\mathbf{\tilde{\mathcal{A}}},\mathbf{\tilde{\mathcal{B}}})$ can be written as 
\begin{equation}
\left(\begin{matrix}
\tau\mathbf{\tilde{\mathcal{A}}}\\
\tau\mathbf{\tilde{\mathcal{B}}}
\end{matrix}\right) 
=
\mathbf{R}\left(\begin{matrix}
\mathbf{\tilde{\mathcal{A}}}\\
\mathbf{\tilde{\mathcal{B}}}
\end{matrix}\right),  \label{R1}
\end{equation}
where $\mathbf{R}\in\mathcal{M}_{2g}(\Z)$. In the following proposition, we give an explicit expression for the matrix $\mathbf{R}$ in terms of the period matrices 
$P_{\tilde{\mathcal{A}}}$ and $P_{\tilde{\mathcal{B}}}$ only.

\begin{proposition}
The matrix $\mathbf{R}$ defined in (\ref{R1}) is given by
\begin{equation}
\mathbf{R}=
\left(\begin{matrix}
\left(2\,\text{Re}\left(P_{\tilde{\mathcal{B}}}\right)\tilde{\mathbb{M}}^{-1}\,\text{Im}\left(P_{\tilde{\mathcal{A}}}^{t}\right)+\mathbb{I}_{g}\right)^{t}&-2\,\text{Re}\left(P_{\tilde{\mathcal{A}}}\right)\tilde{\mathbb{M}}^{-1}\,\text{Im}\left(P_{\tilde{\mathcal{A}}}^{t}\right)\\
2\,\text{Re}\left(P_{\tilde{\mathcal{B}}}\right)\tilde{\mathbb{M}}^{-1}\,\text{Im}\left(P_{\tilde{\mathcal{B}}}^{t}\right)&-\left(2\,\text{Re}\left(P_{\tilde{\mathcal{B}}}\right)\tilde{\mathbb{M}}^{-1}\,\text{Im}\left(P_{\tilde{\mathcal{A}}}^{t}\right)+\mathbb{I}_{g}\right)
\end{matrix}\right),\label{R5}
\end{equation} 
where 
\begin{equation}
    \tilde{\mathbb{M}}=\text{Im}\left(P_{\tilde{\mathcal{B}}}^{t}\right)\text{Re}\left(P_{\tilde{\mathcal{A}}}\right)-\text{Im}\left(P_{\tilde{\mathcal{A}}}^{t}\right)\text{Re}\left(P_{\tilde{\mathcal{B}}}\right).
    \label{Mtilde}
\end{equation}  \label{proposition R}
\end{proposition}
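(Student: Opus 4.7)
The plan is to write $\mathbf{R}=\tilde{\mathcal{P}}\mathcal{P}^{-1}$, where $\mathcal{P}$ and $\tilde{\mathcal{P}}$ are real $2g\times 2g$ matrices built from the real and imaginary parts of the period matrices, and then invert $\mathcal{P}$ using the Riemann bilinear relations. Abbreviate $a=\text{Re}(P_{\tilde{\mathcal{A}}})$, $b=\text{Im}(P_{\tilde{\mathcal{A}}})$, $c=\text{Re}(P_{\tilde{\mathcal{B}}})$, $d=\text{Im}(P_{\tilde{\mathcal{B}}})$, and let $\mathcal{P}$, $\tilde{\mathcal{P}}$ be the $2\times 2$ block matrices with block rows $(a,b),(c,d)$ and $(a,-b),(c,-d)$ respectively. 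Since condition (\ref{diff mu}) is equivalent to $\tau^{*}\nu_{j}=\overline{\nu_{j}}$, the pullback formula $\int_{\tau\gamma}\nu_{j}=\int_{\gamma}\tau^{*}\nu_{j}$ gives $\int_{\tau\gamma}\nu_{j}=\overline{\int_{\gamma}\nu_{j}}$ for every 1-cycle $\gamma$. Applied to (\ref{R1}) and separated into real and imaginary parts, this yields $\mathbf{R}\mathcal{P}=\tilde{\mathcal{P}}$, hence $\mathbf{R}=\tilde{\mathcal{P}}\mathcal{P}^{-1}$.

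To invert $\mathcal{P}$ I compute $\mathcal{P}^{t}J_{g}\mathcal{P}$ in blocks. The off-diagonal blocks are $\pm(a^{t}d-c^{t}b)$, and the imaginary part of the first Riemann bilinear relation $P_{\tilde{\mathcal{A}}}^{t}P_{\tilde{\mathcal{B}}}=P_{\tilde{\mathcal{B}}}^{t}P_{\tilde{\mathcal{A}}}$ gives $a^{t}d-c^{t}b=\tilde{\mathbb{M}}=\tilde{\mathbb{M}}^{t}$. The diagonal blocks are $a^{t}c-c^{t}a$ and $b^{t}d-d^{t}b$; the real part of the same relation makes them equal. To force both to vanish I use (\ref{diff mu}) a second time: the identity $\tau^{*}(\nu_{i}\wedge\overline{\nu_{j}})=\overline{\nu_{i}}\wedge\nu_{j}$ combined with $\tau$ being orientation-reversing gives, after integrating over $\mathcal{R}$, the extra symmetry $H^{t}=H$ for the Hermitian form $H_{ij}=\int_{\mathcal{R}}\nu_{i}\wedge\overline{\nu_{j}}$. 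Combined with the anti-Hermiticity $\overline{H}^{t}=-H$ implied by the Riemann positivity relation, this forces $H$ purely imaginary, and since a direct computation gives $\text{Re}(H)=2(a^{t}c-c^{t}a)$, both diagonal blocks vanish. Thus $\mathcal{P}^{t}J_{g}\mathcal{P}=K$ with $K$ the antidiagonal block matrix having blocks $\pm\tilde{\mathbb{M}}$, and the positivity statement further identifies $iH$ with $-2\tilde{\mathbb{M}}$, so $\tilde{\mathbb{M}}$ is invertible.

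From $\mathcal{P}^{t}J_{g}\mathcal{P}=K$ I obtain $\mathcal{P}^{-1}=K^{-1}\mathcal{P}^{t}J_{g}$; expanding in blocks produces $\mathcal{P}^{-1}$ with block entries $\tilde{\mathbb{M}}^{-1}d^{t},-\tilde{\mathbb{M}}^{-1}b^{t},-\tilde{\mathbb{M}}^{-1}c^{t},\tilde{\mathbb{M}}^{-1}a^{t}$. Multiplying $\tilde{\mathcal{P}}\mathcal{P}^{-1}$ block by block and simplifying each entry with the four identities encoded in $\mathcal{P}\mathcal{P}^{-1}=\mathbb{I}_{2g}$, namely $a\tilde{\mathbb{M}}^{-1}d^{t}-b\tilde{\mathbb{M}}^{-1}c^{t}=\mathbb{I}_{g}$, $a\tilde{\mathbb{M}}^{-1}b^{t}=b\tilde{\mathbb{M}}^{-1}a^{t}$, $c\tilde{\mathbb{M}}^{-1}d^{t}=d\tilde{\mathbb{M}}^{-1}c^{t}$ and $d\tilde{\mathbb{M}}^{-1}a^{t}-c\tilde{\mathbb{M}}^{-1}b^{t}=\mathbb{I}_{g}$, trades the factors involving $a^{t}$ and $d^{t}$ for the simpler combinations that appear in (\ref{R5}), yielding the four blocks of $\mathbf{R}$ in the stated form. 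The main obstacle I expect is the vanishing of the diagonal blocks of $\mathcal{P}^{t}J_{g}\mathcal{P}$: the first Riemann bilinear alone only equates them, and it is the reality hypothesis (\ref{diff mu}) combined with the orientation-reversing character of $\tau$ that supplies the extra symmetry of $H$ forcing this vanishing.
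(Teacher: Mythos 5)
Your proposal is correct and takes a genuinely different route from the paper's proof. The paper starts from the conjugation identity (\ref{R3}), $\mathbf{R}=M^{-1}\left(\begin{smallmatrix}\mathbb{I}_{g}&0\\\mathbb{H}&-\mathbb{I}_{g}\end{smallmatrix}\right)M$ where $M=\left(\begin{smallmatrix}A&B\\C&D\end{smallmatrix}\right)$, expands it via the symplectic inverse formula (\ref{inverse}) to express $\mathbf{R}$ through $C^{t}A$, $C^{t}B$, $D^{t}B$ and $\mathbb{H}$, and then eliminates the unknown blocks $A,B,C,D$ and $\mathbb{H}$ by substituting the relations (\ref{mA})--(\ref{mD}) from Lemma 3.1. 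Your argument bypasses the symmetric basis and Lemma 3.1 entirely: you read off $\mathbf{R}\mathcal{P}=\tilde{\mathcal{P}}$ directly from $\tau^{*}\nu_{j}=\overline{\nu_{j}}$ and the definition (\ref{R1}), and then invert $\mathcal{P}$ using the bilinear pairing $\mathcal{P}^{t}J_{g}\mathcal{P}=\left(\begin{smallmatrix}0&\tilde{\mathbb{M}}\\-\tilde{\mathbb{M}}&0\end{smallmatrix}\right)$, which you derive from the first Riemann bilinear relation together with the orientation-reversing character of $\tau$ applied to the form $H_{ij}=\int_{\mathcal{R}}\nu_{i}\wedge\overline{\nu_{j}}$. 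The step you flagged as the main obstacle — vanishing of the diagonal blocks $a^{t}c-c^{t}a=b^{t}d-d^{t}b$ — is indeed the one place where the reality hypothesis enters a second time, and your use of $\int_{\mathcal{R}}\tau^{*}\Omega=-\int_{\mathcal{R}}\Omega$ to get $H^{t}=H$ (hence $H$ purely imaginary) is exactly the needed ingredient; the final block-by-block simplification via $\mathcal{P}\mathcal{P}^{-1}=\mathbb{I}_{2g}$ then reproduces (\ref{R5}) cleanly. Your route is more elementary and self-contained: it makes no appeal to Vinnikov's theorem on the existence of a symmetric basis or to the machinery of Lemma 3.1, it exhibits $\mathbf{R}$ as an explicit spectral decomposition with the columns of $\left(\begin{smallmatrix}a\\c\end{smallmatrix}\right)$ and $\left(\begin{smallmatrix}b\\d\end{smallmatrix}\right)$ spanning the $+1$ and $-1$ eigenspaces (which also immediately reproves Lemma 3.3), and it yields the invertibility of $\tilde{\mathbb{M}}$ as a byproduct rather than as something tacitly assumed. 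One small slip: with the standard convention $i\int_{\mathcal{R}}\omega\wedge\overline{\omega}>0$ one finds $iH=2\tilde{\mathbb{M}}$, so $\tilde{\mathbb{M}}$ is positive definite rather than negative definite as your sign suggests; this does not affect the invertibility conclusion or anything downstream.
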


\begin{proof}
Using (\ref{hom basis}) we deduce the action of $\tau$ on (\ref{transf Vinn}):
\begin{equation}
\left(\begin{matrix}
A&B\\
C&D
\end{matrix}\right)^{-1}\left(\begin{matrix}
\mathbb{I}_{g}&\,\,0\\
\mathbb{H}\,\,&-\mathbb{I}_{g}
\end{matrix}\right)
\left(\begin{matrix}
\mathbf{\mathcal{A}}\\
\mathbf{\mathcal{B}}
\end{matrix}\right)=
\mathbf{R}\left(\begin{matrix}
A&B\\
C&D
\end{matrix}\right)^{-1}
\left(\begin{matrix}
\mathbf{\mathcal{A}}\\
\mathbf{\mathcal{B}}
\end{matrix}\right), \label{R2}
\end{equation}
which yields
\begin{equation}
\mathbf{R}
=
\left(\begin{matrix}
A&B\\
C&D
\end{matrix}\right)^{-1}\left(\begin{matrix}
\mathbb{I}_{g}&\,\,0\\
\mathbb{H}\,\,&-\mathbb{I}_{g}
\end{matrix}\right)
\left(\begin{matrix}
A&B\\
C&D
\end{matrix}\right). \label{R3}
\end{equation}
In other words, a symplectic matrix which transforms a basis 
$(\tilde{\mathcal{A}},\tilde{\mathcal{B}})$ to a symmetric form
 also satisfies (\ref{R3}). From (\ref{R3}) and (\ref{inverse})  one gets
\begin{equation}
\mathbf{R}=
\left(\begin{matrix}
(2\,C^{t}B-A^{t}\mathbb{H}B+\mathbb{I}_{g})^{t}&2\,D^{t}B-B^{t}\mathbb{H}B\\
-2\,C^{t}A+A^{t}\mathbb{H}A&-(2\,C^{t}B-A^{t}\mathbb{H}B+\mathbb{I}_{g})
\end{matrix}\right).\label{R4}
\end{equation}
Replacing $A$ via (\ref{mA}) and $B$ via (\ref{mB}) in (\ref{mC}) and 
then using (\ref{mA}) again to eliminate the factor $\left[C 
\,\text{Im}\left(P_{\tilde{\mathcal{A}}}\right)+D 
\,\text{Im}\left(P_{\tilde{\mathcal{B}}}\right)\right]$ leads to a 
relation for $C^{t}A$ only (using in the last step (\ref{mB}) instead 
of (\ref{mA}) gives a 
relation for $C^{t}B$). Similarly one gets a relation for $D^{t}B$, 
\begin{align}
2\,C^{t}A&=A^{t}\mathbb{H}A-2\,\text{Re}\left(P_{\tilde{\mathcal{B}}}\right)\tilde{\mathbb{M}}^{-1}\,\text{Im}\left(P_{\tilde{\mathcal{B}}}^{t}\right),\nonumber\\
2\,C^{t}B&=A^{t}\mathbb{H}B+2\,\text{Re}\left(P_{\tilde{\mathcal{B}}}\right)\tilde{\mathbb{M}}^{-1}\,\text{Im}\left(P_{\tilde{\mathcal{A}}}^{t}\right),\nonumber\\
2\,D^{t}B&=B^{t}\mathbb{H}B-2\,\text{Re}\left(P_{\tilde{\mathcal{A}}}\right)\tilde{\mathbb{M}}^{-1}\,\text{Im}\left(P_{\tilde{\mathcal{A}}}^{t}\right),\nonumber
\end{align}
with $\tilde{\mathbb{M}}$ given by (\ref{Mtilde}).
Substituting these relations in (\ref{R4}) one gets (\ref{R5}).
\end{proof}

\subsection{Symplectic transformation}
\label{matrix}

In this part we present in Theorem \ref{Theorem} the main result of 
the present paper: the symplectic transformation between an arbitrary 
basis $(\tilde{\mathcal{A}},\tilde{\mathcal{B}})$ on a real Riemann 
surface and a homology basis adapted to the symmetry is given in 
terms of the period matrices $P_{\tilde{\mathcal{A}}}, 
P_{\tilde{\mathcal{B}}}$ defined in the previous section. This result 
will allow to construct in section \ref{algo} an algorithm which computes the topological type of a given real Riemann surface.

We start with the following lemma which describes the spectral properties of the matrix $\mathbf{R}$ (\ref{R5}):

\begin{lemma}
The matrix $\mathbf{R}$ in (\ref{R5}) is diagonalizable over $\mathbb{Q}$ with eigenvalues $1$ and $-1$. The dimension of the corresponding eigenspaces equals $g$.
\label{Lemma vp}
\end{lemma}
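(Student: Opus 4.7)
The plan is to reduce to studying not $\mathbf{R}$ itself but its similarity normal form already exhibited in equation (\ref{R3}). Writing
\[
S := \left(\begin{matrix} \mathbb{I}_{g} & 0 \\ \mathbb{H} & -\mathbb{I}_{g} \end{matrix}\right),
\]
equation (\ref{R3}) states $\mathbf{R} = M^{-1} S M$ with $M \in Sp(2g,\Z)$, so $\mathbf{R}$ and $S$ are conjugate over $\mathbb{Q}$ (in fact over $\Z$). Diagonalizability and the dimensions of the eigenspaces are invariant under conjugation, so it suffices to prove the statement for $S$, which is much more transparent since $\mathbb{H}$ appears explicitly.

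First I would observe that $S^{2} = \mathbb{I}_{2g}$ by a direct $2\times 2$ block computation (the off-diagonal block is $\mathbb{H} - \mathbb{H} = 0$). Consequently $S$ annihilates the polynomial $X^{2} - 1 = (X-1)(X+1)$, whose roots are distinct; hence $S$ is diagonalizable over $\mathbb{Q}$ with spectrum contained in $\{1,-1\}$.

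To obtain the multiplicities, I would use the trace: $\operatorname{tr}(S) = \operatorname{tr}(\mathbb{I}_{g}) + \operatorname{tr}(-\mathbb{I}_{g}) = 0$. Denoting by $n_{+}, n_{-}$ the dimensions of the eigenspaces for $\pm 1$, diagonalizability gives $n_{+} + n_{-} = 2g$ and the trace gives $n_{+} - n_{-} = 0$, whence $n_{+} = n_{-} = g$. (Alternatively one can compute the eigenspaces directly: $Sv = v$ for $v = (x,y)^{t}$ forces $y = \tfrac{1}{2}\mathbb{H}x$, and $Sv = -v$ forces $x=0$; each eigenspace is then visibly of dimension $g$.)

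I do not anticipate a real obstacle here: the entire argument rests on the identity $S^{2} = \mathbb{I}_{2g}$, which is a consequence of the fact that $\tau$ is an involution and is already encoded in the form of the matrix in (\ref{hom basis}). The only point worth emphasizing in the write-up is that the similarity in (\ref{R3}) is via a $\mathbb{Q}$-matrix (indeed an integer matrix), so diagonalizability transfers from $S$ to $\mathbf{R}$ over $\mathbb{Q}$ as claimed.
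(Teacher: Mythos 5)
Your proof is correct and follows the same route as the paper: both reduce, via the similarity relation (\ref{R3}), to showing that the explicit matrix $S=\left(\begin{smallmatrix}\mathbb{I}_{g}&0\\\mathbb{H}&-\mathbb{I}_{g}\end{smallmatrix}\right)$ is diagonalizable over $\mathbb{Q}$ with eigenvalues $\pm1$ each of multiplicity $g$. The paper merely calls this last fact ``straightforward,'' whereas you spell out the details using $S^{2}=\mathbb{I}_{2g}$ and the trace (or the direct eigenspace computation), which is a fine way to fill in that step.
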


\begin{proof}
It is straightforward to see that the matrix $\left(\begin{matrix}
\mathbb{I}_{g}&\,\,0\\
\mathbb{H}\,\,&-\mathbb{I}_{g}
\end{matrix}\right)$ is diagonalizable over $\mathbb{Q}$: the eigenvalues are $1$ and 
$-1$, and the dimension of the corresponding eigenspaces equals $g$. Therefore, by (\ref{R3}) the same holds for the matrix $\mathbf{R}$. 
\end{proof}

In what follows we denote by 
\begin{equation}
\mathcal{K}_{\Z}:=\left\{w\in\Z^{2g}; (\mathbf{R}^{t}-\mathbb{I}_{2g})w=0\right\} \label{K}
\end{equation} the integer kernel of the matrix $\mathbf{R}^{t}-\mathbb{I}_{2g}$. 
According to the theory of modules over the principal ring $\Z$, the 
$\Z$-module $\mathcal{K}_{\Z}$ admits a $\Z$-basis which can, for 
instance, be computed from the Smith normal form (see section \ref{module}).

\begin{theorem} Let $\Rs$ be a real compact Riemann surface of genus $g$
 and $(\mathbf{\tilde{\mathcal{A}}},\mathbf{\tilde{\mathcal{B}}})$ a 
 canonical
homology basis on $\Rs$. For the given matrices of periods $P_{\tilde{\mathcal{A}}}, P_{\tilde{\mathcal{B}}}$ of holomorphic differentials satisfying (\ref{diff mu}), let  $\left(\begin{matrix}
S_{1}\\S_{2}
\end{matrix}\right)$ be the $2g\times g$ matrix formed by a 
$\Z$-basis of $\mathcal{K}_{\Z}$ (\ref{K}). Then a symplectic matrix in (\ref{transf Vinn}) which relates the homology basis $(\mathbf{\tilde{\mathcal{A}}},\mathbf{\tilde{\mathcal{B}}})$ to a symmetric homology basis $(\mathbf{\mathcal{A}},\mathbf{\mathcal{B}})$ is given by:
\begin{align}\label{AB}
\left(\begin{matrix}
A^{t}\\B^{t}
\end{matrix}\right)&=\left(\begin{matrix}
S_{1}\\S_{2}
\end{matrix}\right)Q\\
\left(\begin{matrix}
C^{t}\\D^{t}
\end{matrix}\right)&=\frac{1}{2}\left(\begin{matrix}
S_{1}\\
S_{2}
\end{matrix}\right)Q\mathbb{H}+
\left(\begin{matrix}
-\text{Re}\left(P_{\tilde{\mathcal{B}}}\right)\\
\,\,\,\,\text{Re}\left(P_{\tilde{\mathcal{A}}}\right)
\end{matrix}\right)\left[S_{1}^{t} \,\text{Re}\left(P_{\tilde{\mathcal{A}}}\right)+S_{2}^{t}\, \text{Re}\left(P_{\tilde{\mathcal{B}}}\right)\right]^{-1}(Q^{t})^{-1} \label{CD}
\end{align} 
where the matrix $\mathbb{H}$ is defined in section 2, and where $Q\in GL_{g}(\Z)$ is such that
\begin{equation}
\left(\begin{matrix}
S_{1}\\
S_{2}
\end{matrix}\right)Q\mathbb{H}Q^{t}
 \equiv  2\left(\begin{matrix}
-\text{Re}\left(P_{\tilde{\mathcal{B}}}\right)\\
\,\,\,\,\text{Re}\left(P_{\tilde{\mathcal{A}}}\right)
\end{matrix}\right)\left[S_{1}^{t} \,\text{Re}\left(P_{\tilde{\mathcal{A}}}\right)+S_{2}^{t}\, \text{Re}\left(P_{\tilde{\mathcal{B}}}\right)\right]^{-1}  \quad (\text{mod} \,\, 2). \label{cond Q}
\end{equation}  \label{Theorem}
\end{theorem}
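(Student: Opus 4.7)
The plan is to derive the structural formula (\ref{AB}) by identifying the integer kernel $\mathcal{K}_{\Z}$ with the $\Z$-span of the first $g$ columns of $M^{t}$, where $M = \left(\begin{smallmatrix} A & B\\ C & D \end{smallmatrix}\right)$ is the sought symplectic matrix, and then use the Lemma of Section 3.2 to produce (\ref{CD}) by a direct substitution. First, starting from (\ref{R3}), I would transpose to get $\mathbf{R}^{t} = M^{t}\,E^{t}\,(M^{t})^{-1}$ with $E=\left(\begin{smallmatrix}\mathbb{I}_{g}&0\\ \mathbb{H}&-\mathbb{I}_{g}\end{smallmatrix}\right)$. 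Since $\mathbb{H}$ is symmetric (from the explicit forms in Section 2), a short computation shows that the $+1$-eigenspace of $E^{t}$ over $\mathbb{Q}$ consists exactly of vectors $(u,0)^{t}$ with $u\in\mathbb{Q}^{g}$, hence $\ker(\mathbf{R}^{t}-\mathbb{I}_{2g})$ over $\mathbb{Q}$ is precisely the $\mathbb{Q}$-span of the first $g$ columns of $M^{t}$, i.e. of $\left(\begin{smallmatrix}A^{t}\\B^{t}\end{smallmatrix}\right)$.

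The second step is the integrality upgrade. Because $M\in Sp(2g,\Z)\subset GL(2g,\Z)$, the $2g$ columns of $M^{t}$ form a $\Z$-basis of $\Z^{2g}$; in particular, any integer vector lying in the $\mathbb{Q}$-span of the first $g$ of them must have integer coefficients in that expansion (apply $(M^{t})^{-1}\in GL(2g,\Z)$). Therefore the columns of $\left(\begin{smallmatrix}A^{t}\\B^{t}\end{smallmatrix}\right)$ constitute a $\Z$-basis of $\mathcal{K}_{\Z}$, not merely a $\mathbb{Q}$-basis. Combining Theorems \ref{theo1} and \ref{adapt basis}, any two $\Z$-bases of a free $\Z$-module of rank $g$ differ by an element of $GL_{g}(\Z)$, which yields the existence of $Q\in GL_{g}(\Z)$ such that (\ref{AB}) holds.

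For (\ref{CD}), I would substitute $A^{t}=S_{1}Q$ and $B^{t}=S_{2}Q$ directly into formulas (\ref{mC})--(\ref{mD}) of the Lemma. The bracket factors out as
\[
A\,\mathrm{Re}(P_{\tilde{\mathcal{A}}})+B\,\mathrm{Re}(P_{\tilde{\mathcal{B}}})
= Q^{t}\bigl[S_{1}^{t}\,\mathrm{Re}(P_{\tilde{\mathcal{A}}})+S_{2}^{t}\,\mathrm{Re}(P_{\tilde{\mathcal{B}}})\bigr],
\]
so its inverse contributes the $(Q^{t})^{-1}$ factor in (\ref{CD}). Finally, the congruence (\ref{cond Q}) arises by imposing that the right-hand side of (\ref{CD}) has integer entries: multiplying (\ref{CD}) by $2$ and then by $Q^{t}$ on the right (which is invertible over $\Z$, hence preserves congruences mod $2$), the integrality of $C^{t}$ and $D^{t}$ translates precisely into (\ref{cond Q}). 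The existence of a $Q$ meeting this congruence is guaranteed a posteriori by Vinnikov's Proposition 2.2, which supplies at least one genuine symplectic $M$.

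The main obstacle I expect is the integrality step: promoting ``$\mathbb{Q}$-basis of the eigenspace'' to ``$\Z$-basis of $\mathcal{K}_{\Z}$.'' This is where the module-theoretic machinery of Section 3.1 (and Remark \ref{remark module}) is doing real work — a free submodule of the right rank is generally not the whole ambient module, so one must explicitly exploit $M\in GL(2g,\Z)$ rather than merely $M\in GL(2g,\mathbb{Q})$. A minor residual point is to check symplecticity of the reconstructed matrix built from an arbitrary $Q$ satisfying (\ref{cond Q}); this amounts to verifying (\ref{Symp1})--(\ref{Symp3}) and should follow by reversing the substitutions that gave (\ref{mA})--(\ref{mD}).
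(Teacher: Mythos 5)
Your proposal is correct and follows essentially the same route as the paper: extract from (\ref{R3}) that the columns of $\left(\begin{smallmatrix}A^{t}\\B^{t}\end{smallmatrix}\right)$ lie in $\mathcal{K}_{\Z}$, upgrade this to a $\Z$-basis statement using $M\in GL(2g,\Z)$, and then obtain $C,D$ and the congruence (\ref{cond Q}) by substituting into (\ref{mC})--(\ref{mD}). The only real stylistic difference is in the integrality step, where the paper argues by contradiction (producing $g+1$ $\Z$-free vectors inside a rank-$g$ module), whereas you apply $(M^{t})^{-1}\in GL(2g,\Z)$ directly to show the rational coefficients are integers — a slightly cleaner but equivalent argument; your concluding remark on verifying symplecticity of the reconstructed matrix for an arbitrary admissible $Q$ is an honest flag of a point the paper also leaves implicit (resting on Vinnikov's existence result and the subsequent proposition that transitions between solutions are exactly the $Q_{0}$ with $Q_{0}\mathbb{H}Q_{0}^{t}\equiv\mathbb{H}$).
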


\begin{proof}

Notice that (\ref{R3}) can be rewritten as
\begin{equation}
\left(\begin{matrix}
A&B\\
C&D
\end{matrix}\right)\mathbf{R}
=
\left(\begin{matrix}
\mathbb{I}_{g}&\,\,0\\
\mathbb{H}\,\,&-\mathbb{I}_{g}
\end{matrix}\right)
\left(\begin{matrix}
A&B\\
C&D
\end{matrix}\right), \label{R3bis}
\end{equation}
which, in particular, gives the following condition for the matrices $A$ and $B$:
\begin{equation}
\left(\begin{matrix}
A&B
\end{matrix}\right)
\left(\mathbf{R}-\mathbb{I}_{2g}\right)=0. \label{AB ker}  
\end{equation}
 Denote by $u_{1},\ldots,u_{g},v_{1},\ldots,v_{g}$ the column vectors  of the matrix $\left(\begin{matrix}
A&B\\
C&D
\end{matrix}\right)^{t}$. By (\ref{AB ker}) the vectors $u_{i}$ for 
$i=1,\ldots,g$ lie in the integer kernel $\mathcal{K}_{\Z}$ of the matrix $\mathbf{R}^{t}-\mathbb{I}_{2g}$.  
Let us prove that $(u_{1},\ldots,u_{g})$ form in fact a $\Z$-basis of the module $\mathcal{K}_{\Z}$. 
By Lemma \ref{Lemma vp} one has $dim(\mathcal{K}_{\mathbb{Q}})=g$, where $\mathcal{K}_{\mathbb{Q}}$ denotes the kernel of $\mathbf{R}^{t}-\mathbb{I}_{2g}$ over the field $\mathbb{Q}$,   which by Theorem \ref{theo1} yields  $rank(\mathcal{K}_{\Z})= g$. Therefore one has to check that $(u_{1},\ldots,u_{g})$ are free vectors over $\Z$ and generate the $\Z$-module $\mathcal{K}_{\Z}$. Notice that here it is important to check that these vectors form a set of generators for the $\Z$-module $\mathcal{K}_{\Z}$, as we saw in Remark \ref{remark module}.

Since $\left(\begin{matrix}
A&B\\
C&D
\end{matrix}\right)^{t}\in GL_{2g}(\Z)$, the $2g$ vectors 
$u_{1},\ldots,u_{g},v_{1},\ldots,v_{g}$ form a $\Z$-basis of the 
module $\Z^{2g}$, which in particular implies that these vectors 
  are free over $\Z$. 
  Then it remains to prove that the vectors $u_{i}, i=1,\ldots,g$ 
generate the $\Z$-module  $\mathcal{K}_{\Z}$ which is done by 
contradiction as follows. Let $w\in \mathcal{K}_{\Z}$ which we write 
in $\Z^{2g}$ as 
$w=\sum_{i=1}^{g}\alpha_{i}u_{i}+\sum_{j=1}^{g}\beta_{j}v_{j}$ with 
$\alpha_{i},\beta_{j}\in\Z$ such that at least one of the $\beta_{j}$ is non-zero. 
Since $w,u_{1},\ldots,u_{g}\in\mathcal{K}_{\Z}$, one has $v:=\sum_{j=1}^{g}\beta_{j}v_{j}\in\mathcal{K}_{\Z}$ and $v\neq 0$.
We deduce that $u_{1},\ldots,u_{g},v$ are $g+1$ free vectors in 
$\mathcal{K}_{\Z}$. This is impossible since  
$rank(\mathcal{K}_{\Z})= g$. Thus one has $\beta_{j}=0$ for $j=1,\ldots,g$ which implies that the vectors $u_{i}, i=1,\ldots,g$ generate the $\Z$-module  $\mathcal{K}_{\Z}$.

%

Hence we can write
\begin{equation}
\left(\begin{matrix}
A^{t}\\B^{t}
\end{matrix}\right)
=
\left(\begin{matrix}
S_{1}\\S_{2}
\end{matrix}\right)Q,  \label{S1 S2}
\end{equation}
for some $Q\in GL_{g}(\Z)$, where the $g$ column vectors of the matrix $\left(\begin{matrix}
S_{1}\\S_{2}
\end{matrix}\right)$ form a $\Z$-basis of the module $\mathcal{K}_{\Z}$. Here the matrix $Q$ encodes  the freedom in the choice of such a basis. 
The matrices $C$ and $D$ are then given by (\ref{mC}) and (\ref{mD}). It follows that these two matrices are integer matrices if and only if the matrix $Q$ satisfies (\ref{cond Q}), which completes the proof.
\end{proof}

\begin{remark}
\emph{If the curve is an M-curve, then $\mathbb{H}=0$ and the matrices $A,B,C,D$ in Theorem \ref{Theorem} are given by
\begin{align}
\left(\begin{matrix}
A^{t}\\B^{t}
\end{matrix}\right)&=\left(\begin{matrix}
S_{1}\\S_{2}
\end{matrix}\right)Q\\
\left(\begin{matrix}
C^{t}\\D^{t}
\end{matrix}\right)&=
\left(\begin{matrix}
-\text{Re}\left(P_{\tilde{\mathcal{B}}}\right)\\
\,\,\,\,\text{Re}\left(P_{\tilde{\mathcal{A}}}\right)
\end{matrix}\right)\left[S_{1}^{t} \,\text{Re}\left(P_{\tilde{\mathcal{A}}}\right)+S_{2}^{t}\, \text{Re}\left(P_{\tilde{\mathcal{B}}}\right)\right]^{-1}(Q^{t})^{-1}
\end{align} 
where $Q\in GL_{g}(\Z)$ is arbitrary.}

\end{remark}

\begin{remark}
\emph{As explained below, if the curve is not an M-curve, namely, $\mathbb{H}\neq 0$, one can construct explicitly a matrix 
$Q\in GL_{g}(\Z/2\Z)$ such that (\ref{cond Q}) holds. }
\end{remark}

This construction of $Q$ is based on the Smith normal form of the matrix $\left(\begin{matrix}
S_{1}\\
S_{2}
\end{matrix}\right)$ (see section \ref{module}) which allows the 
simplification of the system (\ref{cond Q}).

\begin{lemma}
There exist $U\in GL_{2g}(\Z), \,V\in GL_{g}(\Z)$   such that
\begin{equation}
U\left(\begin{matrix}
S_{1}\\
S_{2}
\end{matrix}\right)V=
\left(\begin{matrix}
\mathcal{E}\\
0
\end{matrix}\right),  \label{Smith S}
\end{equation}
where $\mathcal{E}$ is a $g\times g$ diagonal matrix with elements $\mathcal{E}_{ii}=\pm1$ 
for $i=1,\ldots,g$.  \label{U}
\end{lemma}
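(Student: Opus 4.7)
My plan is to apply the Smith normal form \eqref{Smith} to the $2g\times g$ integer matrix $S := \begin{pmatrix}S_1\\S_2\end{pmatrix}$ and then to show that all of its elementary divisors are units of $\Z$. Since the $g$ columns of $S$ form a $\Z$-basis of $\mathcal{K}_\Z$, the rank of $S$ coincides with the rank of $\mathcal{K}_\Z$, which by Lemma \ref{Lemma vp} combined with Theorem \ref{theo1}(ii) equals $g$. The Smith normal form therefore furnishes matrices $U\in GL_{2g}(\Z)$ and $V\in GL_g(\Z)$ with
\[
USV \;=\; \begin{pmatrix}\text{Diag}(p_1,\ldots,p_g)\\0\end{pmatrix},
\]
where $p_1,\ldots,p_g\in\Z\setminus\{0\}$ satisfy $p_1\,|\,p_2\,|\cdots|\,p_g$. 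The remaining task is to prove that $p_i=\pm 1$ for every $i$.

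To this end I shall identify the quotient $\Z^{2g}/\mathcal{K}_\Z$ in two different ways. Write $w_1,\ldots,w_{2g}$ for the columns of $U^{-1}$, which form a $\Z$-basis of $\Z^{2g}$; the relation $SV = U^{-1}\begin{pmatrix}\text{Diag}(p_1,\ldots,p_g)\\0\end{pmatrix}$ exhibits $(p_1 w_1,\ldots,p_g w_g)$ as a $\Z$-basis of $\mathcal{K}_\Z$, so that
\[
\Z^{2g}/\mathcal{K}_\Z \;\cong\; \bigoplus_{i=1}^{g}\Z/p_i\Z \;\oplus\; \Z^{g}.
\]
Independently, I would invoke Vinnikov's existence result (Proposition 2.2 in \cite{Vin}) to obtain a symplectic matrix $M=\begin{pmatrix}A&B\\C&D\end{pmatrix}\in Sp(2g,\Z)$ realising \eqref{transf Vinn}. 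The crucial point, already established in the proof of Theorem \ref{Theorem} via \eqref{AB ker} and the rank argument there, is that the first $g$ columns of $M^t$ form a $\Z$-basis of $\mathcal{K}_\Z$. Since $M^t\in GL_{2g}(\Z)$, those $g$ vectors are completed to a $\Z$-basis of $\Z^{2g}$ by the remaining columns of $M^t$. Consequently $\mathcal{K}_\Z$ is a direct summand of $\Z^{2g}$ and $\Z^{2g}/\mathcal{K}_\Z\cong\Z^g$ is torsion-free.

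Comparing the two expressions for $\Z^{2g}/\mathcal{K}_\Z$ forces the torsion part $\bigoplus_i\Z/p_i\Z$ to vanish, so each $p_i$ is a unit of $\Z$, i.e.\ $p_i=\pm 1$. Setting $\mathcal{E} := \text{Diag}(p_1,\ldots,p_g)$ then yields the decomposition \eqref{Smith S}. The delicate point I anticipate is precisely this direct-summand property: in general a rank-$g$ submodule of $\Z^{2g}$ need not be a direct summand, as Remark \ref{remark module} expressly warns, and it is the additional rigidity supplied by the existence of the symmetric homology basis that rules out any torsion in the quotient $\Z^{2g}/\mathcal{K}_\Z$.
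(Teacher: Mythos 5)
Your proof is correct, and it reaches the conclusion by a genuinely different route than the paper does, even though both ultimately rest on the same key input from the proof of Theorem~\ref{Theorem}: that the first $g$ columns of $M^t$, for the symplectic $M$ furnished by Vinnikov's result, form a $\Z$-basis of $\mathcal{K}_{\Z}$ that extends to a $\Z$-basis of $\Z^{2g}$. From that shared input, the paper extracts the conclusion by a direct determinant computation: using $\left(\begin{smallmatrix}A^{t}\\B^{t}\end{smallmatrix}\right)=\left(\begin{smallmatrix}S_{1}\\S_{2}\end{smallmatrix}\right)Q$, it writes $1=\det M=\pm\det(\mathbb{D})\det(F_{2})$ with both factors integers, forcing $\det(\mathbb{D})=\pm1$ and hence, together with $p_{1}\,|\,\cdots\,|\,p_{g}$ and $p_{i}\in\N$, each $p_{i}=1$. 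You instead argue structurally: you read off $\Z^{2g}/\mathcal{K}_{\Z}\cong\bigoplus_{i}\Z/p_{i}\Z\oplus\Z^{g}$ from the Smith form, observe that the special basis coming from $M^{t}$ exhibits $\mathcal{K}_{\Z}$ as a direct summand of $\Z^{2g}$ (so the quotient is torsion-free), and conclude that the torsion part must vanish. The two routes buy slightly different things: the paper's determinant calculation is more elementary and self-contained, while yours makes transparent \emph{why} the elementary divisors must all be units --- the quotient module is free precisely because $\mathcal{K}_{\Z}$ splits off, which is the conceptual content that Remark~\ref{remark module} warns need not hold for an arbitrary rank-$g$ submodule. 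Both are valid; your version perhaps better isolates the role of the existence of a symmetric homology basis as the source of the rigidity.
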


\begin{proof}
By (\ref{Smith}) there exist $U\in GL_{2g}(\Z), \,V\in 
GL_{g}(\Z)$  and $p_{1},\ldots,p_{g}\in\N\setminus\{0\}$ satisfying   $p_{1}|p_{2}|\ldots|p_{g}$ such that
\begin{equation}
U\left(\begin{matrix}
S_{1}\\
S_{2}
\end{matrix}\right)V=
\left(\begin{matrix}
\mathbb{D}\\
0
\end{matrix}\right),  \label{Smith Sbis}
\end{equation}
where $\mathbb{D}:=\text{Diag}(p_{1},\ldots,p_{g})$. The fact that 
$\mathbb{D}=\mathcal{E}$ can be deduced from the following equalities:
\begin{align}
1&=\det\left(\begin{matrix}
A&B\\
C&D
\end{matrix}\right)
=\det\left(\begin{matrix}
S_{1}&C^{t}\\
S_{2}&D^{t}
\end{matrix}\right)\det(Q)
=\pm \det\left(\begin{matrix}
\mathbb{D}&F_{1}\\
0&F_{2}
\end{matrix}\right)=\pm \det(\mathbb{D})\det(F_{2})\label{det1}
\end{align}
where we  multiplied the matrix in the 
determinant from the left by $U$ and from the right by $
\begin{pmatrix}
    V & 0 \\
    0 & \mathbb{I}_{g}
\end{pmatrix}
$, and used $\det(Q)\det(U)\det(V)=\pm 1$ since 
$Q,V\in GL_{g}(\mathbb{Z})$ and $U\in GL_{2g}(\mathbb{Z})$; here $
\begin{pmatrix}
    F_{1} \\
    F_{2}
\end{pmatrix}=U
\begin{pmatrix}
    C^{t} \\
    D^{t}
\end{pmatrix}
$.
We deduce that $\det(\mathbb{D})=\pm 1$ since the right-hand side of 
(\ref{det1}) is a product of determinants of 
matrices with integer coefficients. This completes the proof.
\end{proof}

Now let us define matrices $N_{1},N_{2}\in \mathcal{M}_{g}(\Z)$ as follows:
\begin{equation}
\left(\begin{matrix}
N_{1}\\
N_{2}
\end{matrix}\right):=2\,VU\left(\begin{matrix}
-\text{Re}\left(P_{\tilde{\mathcal{B}}}\right)\\
\,\,\,\,\text{Re}\left(P_{\tilde{\mathcal{A}}}\right)
\end{matrix}\right)\left[S_{1}^{t} \,\text{Re}\left(P_{\tilde{\mathcal{A}}}\right)+S_{2}^{t}\, \text{Re}\left(P_{\tilde{\mathcal{B}}}\right)\right]^{-1}.  \label{N1}
\end{equation}
Then one has:

\begin{proposition}
$Q\in GL_{g}(\Z)$ satisfies (\ref{cond Q}) if and only if it solves 
\begin{equation}
 Q\mathbb{H}Q^{t}\equiv  N_{1} \quad(\text{mod} \,\, 2), \label{Q}
\end{equation}
where $N_{1}\in \mathcal{M}_{g}(\Z)$ is defined in (\ref{N1}). 
Moreover, if $\tilde{Q}$ is a particular solution of (\ref{Q}) then the 
general solution can be written as $\tilde{Q}Q_{0}$ where $Q_{0}$ solves 
\begin{equation}
 Q_{0}\mathbb{H}Q_{0}^{t}\equiv  \mathbb{H} \quad(\text{mod} \,\, 2). \label{Q0}
\end{equation}

\end{proposition}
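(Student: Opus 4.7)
The plan is to reduce the $2g\times g$ congruence (\ref{cond Q}) to the $g\times g$ congruence (\ref{Q}) by applying the Smith-form factorization of Lemma \ref{U}, and then to parametrize all solutions by a coset argument in $GL_g(\mathbb{Z}/2\mathbb{Z})$.

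First I multiply (\ref{cond Q}) on the left by $U\in GL_{2g}(\mathbb{Z})$; since $U$ and $U^{-1}$ both have integer entries, this preserves the mod-$2$ congruence in both directions, so no information is lost. By Lemma \ref{U} the left-hand side becomes $\begin{pmatrix}\mathcal{E}V^{-1}Q\mathbb{H}Q^t\\0\end{pmatrix}$, while by the definition (\ref{N1}) the right-hand side matches $\begin{pmatrix}N_1\\N_2\end{pmatrix}$, once one incorporates the $V$-factor and uses $\mathcal{E}^2=\mathbb{I}_g$ together with $\mathcal{E}\equiv\mathbb{I}_g\pmod 2$. Concretely, multiplying the top $g$ rows on the left by the integer-invertible matrix $V\mathcal{E}$ collapses $\mathcal{E}V^{-1}Q\mathbb{H}Q^t$ to $Q\mathbb{H}Q^t$, giving $Q\mathbb{H}Q^t\equiv N_1\pmod 2$, which is (\ref{Q}). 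The bottom $g$ rows give the $Q$-independent consistency condition $N_2\equiv 0\pmod 2$, which is automatic from the existence of an integer solution $Q$ to (\ref{cond Q}) asserted by Theorem \ref{Theorem}. Since every manipulation used an invertible integer matrix, the argument is reversible, establishing the equivalence.

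For the second assertion, given a particular solution $\tilde{Q}$ of (\ref{Q}), any element of $GL_g(\mathbb{Z})$ can be written uniquely as $Q=\tilde{Q}Q_0$ with $Q_0:=\tilde{Q}^{-1}Q\in GL_g(\mathbb{Z})$. Substituting into (\ref{Q}) and using $\tilde{Q}\mathbb{H}\tilde{Q}^t\equiv N_1\pmod 2$ produces $\tilde{Q}Q_0\mathbb{H}Q_0^t\tilde{Q}^t\equiv\tilde{Q}\mathbb{H}\tilde{Q}^t\pmod 2$. Because $\det\tilde{Q}=\pm 1$, the reduction of $\tilde{Q}$ lies in $GL_g(\mathbb{Z}/2\mathbb{Z})$, so we can cancel $\tilde{Q}$ on the left and $\tilde{Q}^t$ on the right modulo $2$ to obtain (\ref{Q0}). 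Reversing the argument shows that every $Q_0$ satisfying (\ref{Q0}) produces a solution $Q=\tilde{Q}Q_0$ of (\ref{Q}), and the map $Q_0\mapsto\tilde{Q}Q_0$ is a bijection on $GL_g(\mathbb{Z})$.

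The main obstacle is the bookkeeping needed to match the slightly informal product $VU$ in (\ref{N1}), where $V$ is $g\times g$ and $U$ is $2g\times 2g$, with the computation that strips the factor $\mathcal{E}V^{-1}$ from the top Smith block, and verifying the $Q$-independent consistency $N_2\equiv 0\pmod 2$. Once these identifications are made correctly, the rest reduces to routine manipulation of integer matrices modulo $2$.
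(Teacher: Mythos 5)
Your proof of the first equivalence is essentially the paper's argument: multiply (\ref{cond Q}) on the left by the unimodular $U$ from Lemma \ref{U}, apply the Smith normal form to collapse $\begin{pmatrix}S_1\\S_2\end{pmatrix}$ to $\begin{pmatrix}\mathcal{E}\\0\end{pmatrix}$, use $\mathcal{E}\equiv\mathbb{I}_g\pmod 2$ together with a further multiplication by $V$ on the top block, and arrive at (\ref{Q}). You are a bit more explicit than the paper in naming the $Q$-independent consistency condition $N_2\equiv 0\pmod 2$ coming from the bottom block and in justifying it via the existence of at least one solution of (\ref{cond Q}) (which is supplied by Theorem \ref{Theorem}); the paper glosses over this, so your observation is a small but legitimate tightening.

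For the second assertion your route differs from the paper's. You argue purely algebraically: set $Q_0:=\tilde{Q}^{-1}Q$, substitute into (\ref{Q}), and cancel $\tilde{Q}$ and $\tilde{Q}^t$ modulo $2$, which is permitted because $\det\tilde{Q}=\pm 1$ makes $\tilde{Q}$ a unit in $GL_g(\mathbb{Z}/2\mathbb{Z})$. This directly yields the equivalence with (\ref{Q0}) and is a clean coset argument. The paper instead reasons geometrically: a change $Q\mapsto Q Q_0$ is a change of $\Z$-basis of $\mathcal{K}_{\Z}$, hence corresponds to a symplectic transformation between two symmetric homology bases, whose explicit form (\ref{symp sym}) then forces $Q_0\mathbb{H}Q_0^t\equiv\mathbb{H}\pmod 2$. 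Your version is shorter and self-contained; the paper's version pays for its detour with the explicit symplectic matrix relating two symmetric bases, which is of independent interest. Both arguments are correct, so either is acceptable here.
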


\begin{proof}
Multiplying  equality (\ref{cond Q}) from the left by the matrix $U$ 
of Lemma \ref{U} and using (\ref{Smith S}) one gets 
\begin{equation}
\left(\begin{matrix}
\mathbb{I}_{g}\\
0
\end{matrix}\right)V^{-1}Q\mathbb{H}Q^{t}
 \equiv  2\,U\left(\begin{matrix}
-\text{Re}\left(P_{\tilde{\mathcal{B}}}\right)\\
\,\,\,\,\text{Re}\left(P_{\tilde{\mathcal{A}}}\right)
\end{matrix}\right)\left[S_{1}^{t} \,\text{Re}\left(P_{\tilde{\mathcal{A}}}\right)+S_{2}^{t}\, \text{Re}\left(P_{\tilde{\mathcal{B}}}\right)\right]^{-1}  \quad (\text{mod} \,\, 2), \nonumber
\end{equation}
which is equivalent to
\begin{equation}
\left(\begin{matrix}
\mathbb{I}_{g}\\
0
\end{matrix}\right)Q\mathbb{H}Q^{t}
 \equiv  2\,VU\left(\begin{matrix}
-\text{Re}\left(P_{\tilde{\mathcal{B}}}\right)\\
\,\,\,\,\text{Re}\left(P_{\tilde{\mathcal{A}}}\right)
\end{matrix}\right)\left[S_{1}^{t} \,\text{Re}\left(P_{\tilde{\mathcal{A}}}\right)+S_{2}^{t}\, \text{Re}\left(P_{\tilde{\mathcal{B}}}\right)\right]^{-1}  \quad (\text{mod} \,\, 2). \nonumber
\end{equation}
Using the definition (\ref{N1}) one gets (\ref{Q}). 

Now to check that a transformation of the form $Q\rightarrow QQ_{0}$ 
where $Q_{0}$ solves (\ref{Q0}) is the only one which preserves 
(\ref{Q}), notice that such a transformation corresponds to a 
symplectic transformation between the  symmetric homology basis 
obtained from Theorem \ref{Theorem} and another symmetric homology basis, since this coincides with a change of the $\Z$-basis in (\ref{AB}). 
Hence from (\ref{hom basis}) it is straightforward to see that the 
symplectic matrix which relates two symmetric homology bases  is given by
\begin{equation}
\left(\begin{matrix}
Q_{0}^{t} & 0\\
\frac{1}{2}(\mathbb{H}Q_{0}^{t}-Q_{0}^{-1}\mathbb{H})& Q_{0}^{-1}   \label{symp sym}
\end{matrix}\right),
\end{equation}
with $Q_{0}\in GL_{g}(\Z)$ satisfying $Q_{0}\mathbb{H}Q_{0}^{t}\equiv \mathbb{H} \,\,(\text{mod} \,\, 2)$.
This completes the proof.
\end{proof}


\section{Algorithm for the computation of the topological type $(g,k,a)$}

\label{algo}

The results of the previous section allow us to formulate an 
algorithm to transform an arbitrary canonical homology basis 
$(\tilde{\mathcal{A}},\tilde{\mathcal{B}})$, for 
instance obtained via the algorithm \cite{tretalg}, to a symmetric 
basis $(\mathcal{A},\mathcal{B})$ satisfying (\ref{hom basis}). The 
key task in this context is the computation of the matrix $Q$ in 
(\ref{Q}). In the process of computing $Q$, the matrix $\mathbb{H}$ 
giving the topology of the real Riemann surface can be determined.

The starting point of the algorithm are the periods $P_{\tilde{\mathcal{A}}}$ 
and $P_{\tilde{\mathcal{B}}}$ of a basis of differentials 
$(\nu_{1},\ldots,\nu_{g})$ satisfying (\ref{diff mu}). Notice that 
condition (\ref{diff mu}) is important. The Maple 
\emph{algcurves} package generates such differentials for real curves 
by default. For the Matlab code used in this paper this is in general 
not the case if a numerically optimal approach is used to determine 
the holomorphic differentials, see \cite{FK} for 
details. However, it is possible to determine a basis of the 
holomorphic differentials with rational coefficients which will satisfy condition 
(\ref{diff mu}).
For the examples in the following section, we always choose 
this option.

With these periods the code computes the matrix $\mathbf{R}$ via 
(\ref{R5}). This matrix will have integer entries up to the used 
precision (by default $10^{-6}$ in Maple and $10^{-12}$ in Matlab). 
Rounding has to be used to obtain an integer matrix. 
Computing the Smith normal 
form (\ref{Smith}) of the matrix $\mathbf{R}^{t}-\mathbb{I}_{2g}$, we 
obtain from the last $g$ vectors of the resulting matrix $V$ a $\Z$-basis of the integer kernel  
$\mathcal{K}_{\Z}$ (\ref{K}), 
i.e., the column vectors of the matrix $
\begin{pmatrix}
    S_{1} \\
    S_{2}
\end{pmatrix}
$ in (\ref{S1 S2}). 

An algorithm to compute the Smith normal form  $UMV=S$ of an integer matrix 
$M$ is 
implemented in Maple. This algorithm can be called from Matlab via 
the symbolic toolbox. We use here an own implementation of the standard 
algorithm to compute the Smith normal form which we briefly summarize: we always work on column $j$ 
starting with $j=1$. If there is no nonzero element in this column, 
it is swapped by multiplication with an appropriate matrix $U$ from 
the left with the last column with a nonzero element. If the element 
$M_{jj}=0$, a row with a nonzero element in position $j$ is added (to 
avoid clumsy notation, the transformed matrix is still called $M$). 
Then all nonzero elements $M_{jk}$ for $k\neq j$ are eliminated by 
adding row $j$ with appropriate multipliers obtained via the 
Euclidean algorithm. In the same way the row with index $j$ is 
cleaned by acting on $M$ via multiplication by a matrix $V$ from the 
right. If the resulting element $M_{jj}$ does not divide all other 
elements of $M$, one of these elements is added by using the 
Euclidean algorithm to $M_{jj}$ in a way that the latter becomes 
smaller. This destroys possibly the nullity of the remaining elements 
in column $j$ and row $j$ which thus have to be cleaned as before. 
This process is repeated until $M_{jj}$ divides all other elements of 
$M$. Then the index $j$ is incremented by 1. The procedure is 
repeated until the Smith normal form is obtained. 

Notice that this standard algorithm has a well known problem: in 
general for larger matrices the entries of the matrices $U$ and $V$ 
become very large. Though these are integer matrices, this is 
problematic once some of the entries are of the order of $10^{16}$ (machine 
precision in Matlab is $10^{-16}$ which implies that integers of the 
order of $10^{16}$, which are internally treated as floating point 
numbers, can no longer be numerically distinguished). There are more 
sophisticated algorithms to treat larger matrices as the one given in \cite{HafCur}. 
In practice the standard algorithm works well for examples of a genus 
$g\leq 6$ which is sufficient for our purposes. Only for higher 
genus, the algorithm \cite{HafCur} would be needed. 

By computing the Smith normal form of the vector $
\begin{pmatrix}
    S_{1} \\
    S_{2}
\end{pmatrix}$
in (\ref{Smith S}), we get the needed quantities to compute the matrix $N_{1}$ 
in (\ref{N1}).
The main task is then to determine the matrix $Q$ in (\ref{cond Q}) since for 
given periods, the whole symplectic matrix (\ref{transf Vinn}) 
follows from equations (\ref{AB}) and (\ref{CD}) for given $
\begin{pmatrix}
    S_{1} \\
    S_{2}
\end{pmatrix}
$. The matrices $Q$ and $\mathbb{H}$ can be determined from relation 
(\ref{Q}) for a given matrix $N_{1}$ by 
standard Gaussian elimination in  $\mathbb{Z}/2\mathbb{Z}$ and by 
imposing  the block diagonal form of section 2 on $\mathbb{H}$ as we 
will outline below.

\begin{remark}\label{rem}
\emph{It was shown in section 2 that the matrix $\mathbb{H}$ can be 
chosen to be either diagonal or to consist of blocks of the form 
$$   H_{0}= 
    \begin{pmatrix}
        0 & 1 \\
        1 & 0
    \end{pmatrix}.
$$ A unique determination of the matrix $\mathbb{H}$ in the  computation of $Q$ from 
(\ref{Q}) is only possible, if this block $H_{0}$  cannot be related 
through a similarity transformation in  $\mathbb{Z}/2\mathbb{Z}$ to 
$\mathbb{I}_{2}$. In 
fact this is the case since $H_{0}$ cannot be diagonalized in  
$\mathbb{Z}/2\mathbb{Z}$.
The same reasoning applies if the 
matrix $\mathbb{H}$ consists of several blocks $H_{0}$ and 
zeros otherwise.\\
However, a block of the form
$$
H_{1}=\begin{pmatrix}
    1 & 0 & 0 \\
    0 & 0 & 1 \\
    0 & 1 & 0
\end{pmatrix}
$$ can be diagonalized in  $\mathbb{Z}/2\mathbb{Z}$ by multiplication 
from the left and the right by a matrix of the form
$$
Q_{1}=\begin{pmatrix}
    1 & 1 & 1 \\
    1 & 0 & 1 \\
    1 & 1 & 0
\end{pmatrix}.
$$ 
It follows 
that if the matrix  $\mathbb{H}$ determined in 
the computation of $Q$ from (\ref{Q}) has a non-zero diagonal element, 
then $\mathbb{H}$ can be diagonalized in several steps: if the matrix 
$\mathbb{H}$ has the form $\tilde{H}$ below 
($\tilde{H}_{ij}=\delta_{ij}$ for $i,j<k$ and a block $H_{0}$ for 
$i,j=k,k+1$)
$$
\tilde{H}=\begin{pmatrix}
    1 &  &  &  &  &  &  &  \\
     & \ddots &  &  &  &  &  &  \\
     &  & 1 &  &  &  &  &  \\
     &  &  & 0 & 1 &  &  &  \\
     &  &  & 1 & 0 &  &  &  \\
     &  &  &  &  &  &  &  \\
     &  &  &  &  &  &  &  \\
     &  &  &  &  &  &  & 
\end{pmatrix}
\qquad
\tilde{Q}=\begin{pmatrix}
    1 &  &  & 1  & 1 &  &  &  \\
     &  &  &  &  &  &  &  \\
     &  &  &  &  &  &  &  \\
    1 &  &  & 0 & 1 &  &  &  \\
    1 &  &  & 1 & 0 &  &  &  \\
     &  &  &  &  &  &  &  \\
     &  &  &  &  &  &  &  \\
     &  &  &  &  &  &  & 
\end{pmatrix}
$$
then multiplication from the left and the right with a matrix 
$\tilde{Q}$ (not shown elements of this matrix are 0) gives the identity matrix for $i,j\leq k+1$. Applying 
this procedure several times will lead to a diagonal 
matrix $\mathbb{H}$. }
\end{remark}

The algorithm for the computation of $Q$ and $\mathbb{H}$ via the similarity relation 
(\ref{Q}) for given $N_{1}$ by imposing a block diagonal form for the 
matrix $\mathbb{H}$ as in section 2 uses  
in principle standard Gauss elimination on the rows and 
columns of $N_{1}$ over the field $\mathbb{Z}/2\mathbb{Z}$ with minor 
modifications as detailed below. We only describe 
the action on the columns via a matrix $q$ from the right, since the action on the rows follows by 
symmetry by multiplication with $q^{t}$ from the left: \\
- if $N_{1}\equiv 0 \,(\text{mod}\, 2)$, put $\mathbb{H}=0$ and $Q=\mathbb{I}_{g}$ and end the algorithm, otherwise, put the index $j$ of the column under consideration equal to 1;\\
- if column $j$ contains only zeros, it   
is swapped with the last column with non-zero entries;\\
- if there is a 1 in position $j$ of the column, all further 
non-zero entries in the 
column are eliminated in standard way;\\
- if there is a 1 in the column, but not in position $j$,  
rows are swapped in a way that it appears in the position $j+1$ of 
the column (it cannot be put to position $j$ as explained in 
Remark~\ref{rem}). Further ones in the column are eliminated; \\
- if there was  a non-zero diagonal entry, the column index $j$ is 
incremented by 1, if there was a block $ \begin{pmatrix}
        0 & 1 \\
        1 & 0
    \end{pmatrix},$
the index $j$ is incremented by 2. Then the 
algorithm is repeated with column $j$ until $j=g$ or 
until the columns with index $j$ and higher only contain zeros; \\
- if there are blocks of the form $\tilde{H}$ in Remark~\ref{rem}, then 
$\mathbb{H}$ will be diagonalized by multiplication with the 
corresponding matrix 
$\tilde{Q}$ as explained in \ref{rem}.

\section{Examples}
In this section we study examples of real algebraic curves, 
provide the computed periods and the application of the algorithm to 
obtain a symmetric homology basis as well as the matrix $\mathbb{H}$ 
encoding the topological information of the curve. For convenience we 
use here the Matlab algebraic curves package, but the same examples 
can be of course studied with the Maple package. We also give 
graphical representations of the real variety of an algebraic 
curve if there is any 
which is generated via contour plots of 
$f(x,y)=0$ for real $x $ and $y$ (this corresponds to the command 
{\tt plot\_real\_curve} in the Maple \emph{algcurves} package). This 
is not identical with the set $\mathcal{R}(\R)$ of real ovals of the 
Riemann surface since the curves may have singularities, whereas the 
Riemann surface is defined by desingularized such curves (see 
\cite{FK} for how this is done in the Matlab package). Thus there may 
be cusps and self-intersections in the shown plots. Moreover these plots are not conclusive if 
the curves come very close, and if there are self-intersections as in 
Fig.~\ref{figx9} below. In addition we only show the curves for finite 
values of $x$ and $y$ from which it cannot be decided which curves 
cross at infinity and which lines belong to the same ovals as in 
Fig.~\ref{figdivid}. They just serve for illustration purpose, for 
more sophisticated approaches, see \cite{Arnon, Coste, Sak, Feng, Gonz,  Hong, 
Seidel}. The computed number of real ovals via the 
algorithm is, however, unique: as outlined in section 2, it follows 
from the rank of the matrix $\mathbb{H}$ (for $k\neq0$ one has 
$k=g+1-\mbox{rank}(\mathbb{H})$). We always assume in the following that 
it is known whether there are any real ovals. This allows  the 
unique identification of the topological type $(g,k,a)$ via the matrix $\mathbb{H}$.

The Trott curve \cite{Trott} given by the algebraic equation 
\begin{equation}
    144\,(x^{4}+y^{4})-225\,(x^{2}+y^{2})+350\,x^{2}y^{2}+81=0
    \label{trott}
\end{equation}
is known to be an M-curve  of genus 3 (it has the maximal number $g+1=4$ of real ovals, as can be seen in Fig.~\ref{figtrott}). Moreover, this curve has real 
branch points only (and 28 real bitangents, namely, tangent lines to the curve in two 
places). 
\begin{figure}[htb!]
\begin{center}
\includegraphics[width=0.7\textwidth]{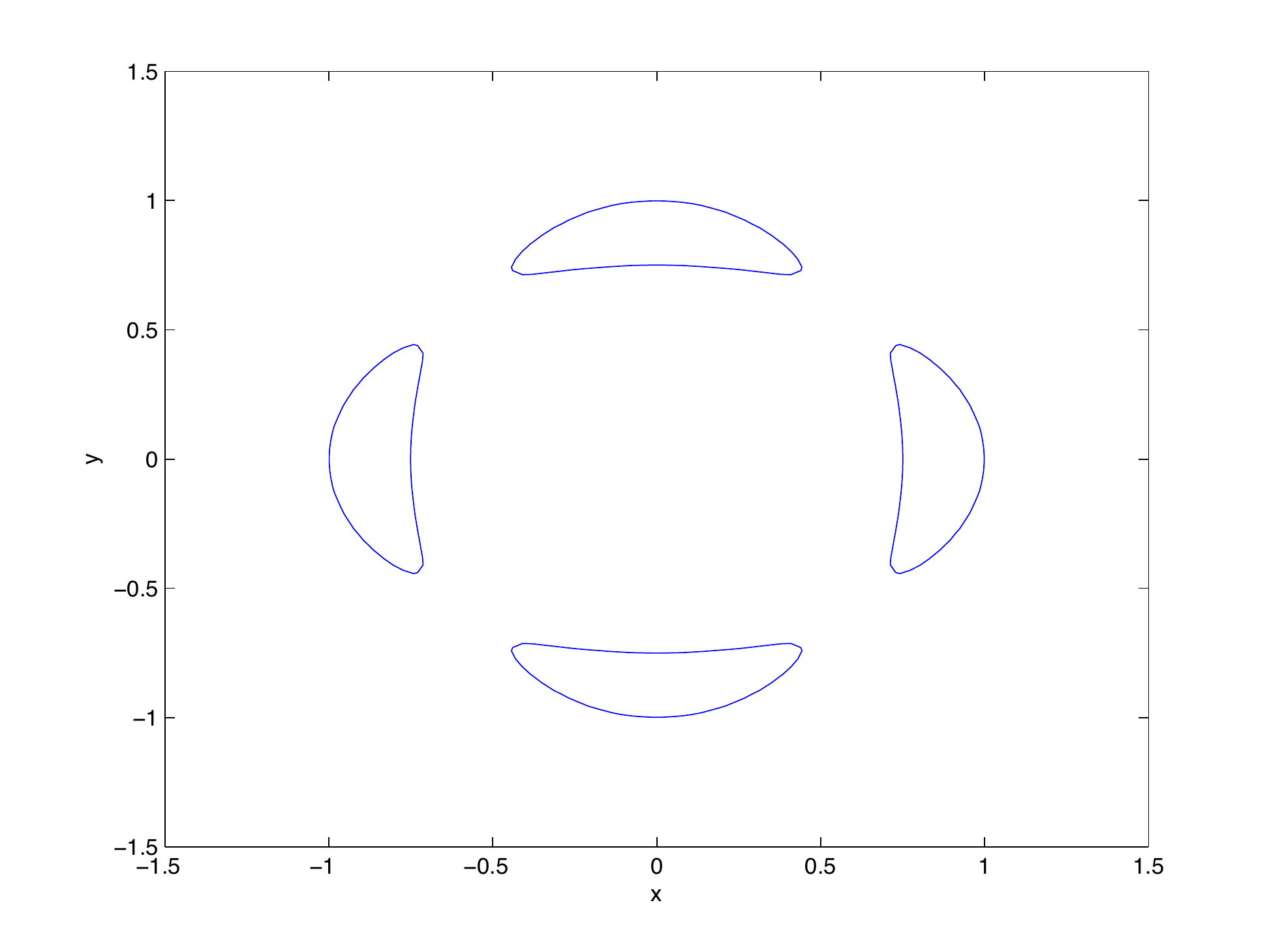}
\end{center}
 \caption{Real ovals of the Trott curve (\ref{trott}), an M-curve of 
 genus 3.}
 \label{figtrott}
\end{figure}
Our computed matrices of $\tilde{\mathcal{A}}$ and 
$\tilde{\mathcal{B}}$-periods denoted by    
\texttt{aper} and \texttt{bper} respectively read\footnote{For the ease of 
representation we only give 4 digits here, though at least 12 digits 
are known for these quantities.}
\begin{verbatim}
aper =

  -0.0000 + 0.0235i  -0.0000 + 0.0138i  -0.0000 + 0.0138i
        0 + 0.0000i   0.0000 + 0.0277i        0 + 0.0000i
  -0.0315            -0.0000 + 0.0000i   0.0250 - 0.0000i
  
bper =

  -0.0315 + 0.0235i  -0.0000 + 0.0138i  -0.0250 + 0.0138i
  -0.0000 + 0.0000i  -0.0250 + 0.0277i   0.0250 - 0.0000i
  -0.0000 - 0.0235i   0.0000 + 0.0138i        0 + 0.0138i.  
\end{verbatim}
For this the algorithm produces as expected $\mathbb{H}=0$ and $Q$ of 
(\ref{Q}) the 
identity matrix. 
The symplectic transformation found by the  algorithm via (\ref{AB}) 
and (\ref{CD}) has the form
\begin{verbatim}
    [A,B,C,D] =

     0     1     0     0    -1     0     0     1     0     0     0     0
     1     0     0    -1     0     0     1     0     0     0     0     0
     0     0     1     0     0     0     0     0     0     0     0     1.
\end{verbatim}
Since we will not actually use the matrices $A,B,C,D$ in this 
article, and since they follow for given periods from $\mathbb{H}$ 
and $Q$ via 
(\ref{AB}) and (\ref{CD}) we will only give them for this example. 
However they are 
needed e.g.~for the study of algebro-geometric solutions to integrable 
equations as  KP and DS  as in \cite{KKnum}.

%
%
%
%
%
%
%

The Klein curve given by the equation 
\begin{equation}
    y^7-x(x-1)^2=0
    \label{klein}
\end{equation}
has the maximal number of automorphisms  (168) of a genus 3 curve. The 
computed periods read
\begin{verbatim}
aper =

  -0.9667 + 0.7709i   0.9667 + 0.2206i   0.9667 - 2.0073i
  -1.2054 - 0.2751i  -0.4302 + 0.8933i  -1.7419 + 1.3891i
  -0.4302 - 0.8933i   1.7419 + 1.3891i  -1.2054 + 0.2751i

bper =

  -2.7085 - 0.6182i  -0.2387 + 0.4958i   1.3969 - 1.1140i
  -2.1721 - 1.7322i   0.5365 - 0.1224i  -0.7752 - 1.6097i
   0.9667 + 0.2206i  -0.9667 + 2.0073i  -0.9667 + 0.7709i.
\end{verbatim}
The algorithm finds 
\begin{verbatim}
H =

     1     0     0
     0     1     0
     0     0     1
Q =

     1     1     1
     0     0     1
     0     1     0.
\end{verbatim}
Therefore, the topological type of the curve is $(3,1,1)$, namely, the curve has genus $3$, one real oval (as can be also seen in Fig.~\ref{figklein}) and is non-dividing.  
\begin{figure}[htb!]
\begin{center}
\includegraphics[width=0.6\textwidth]{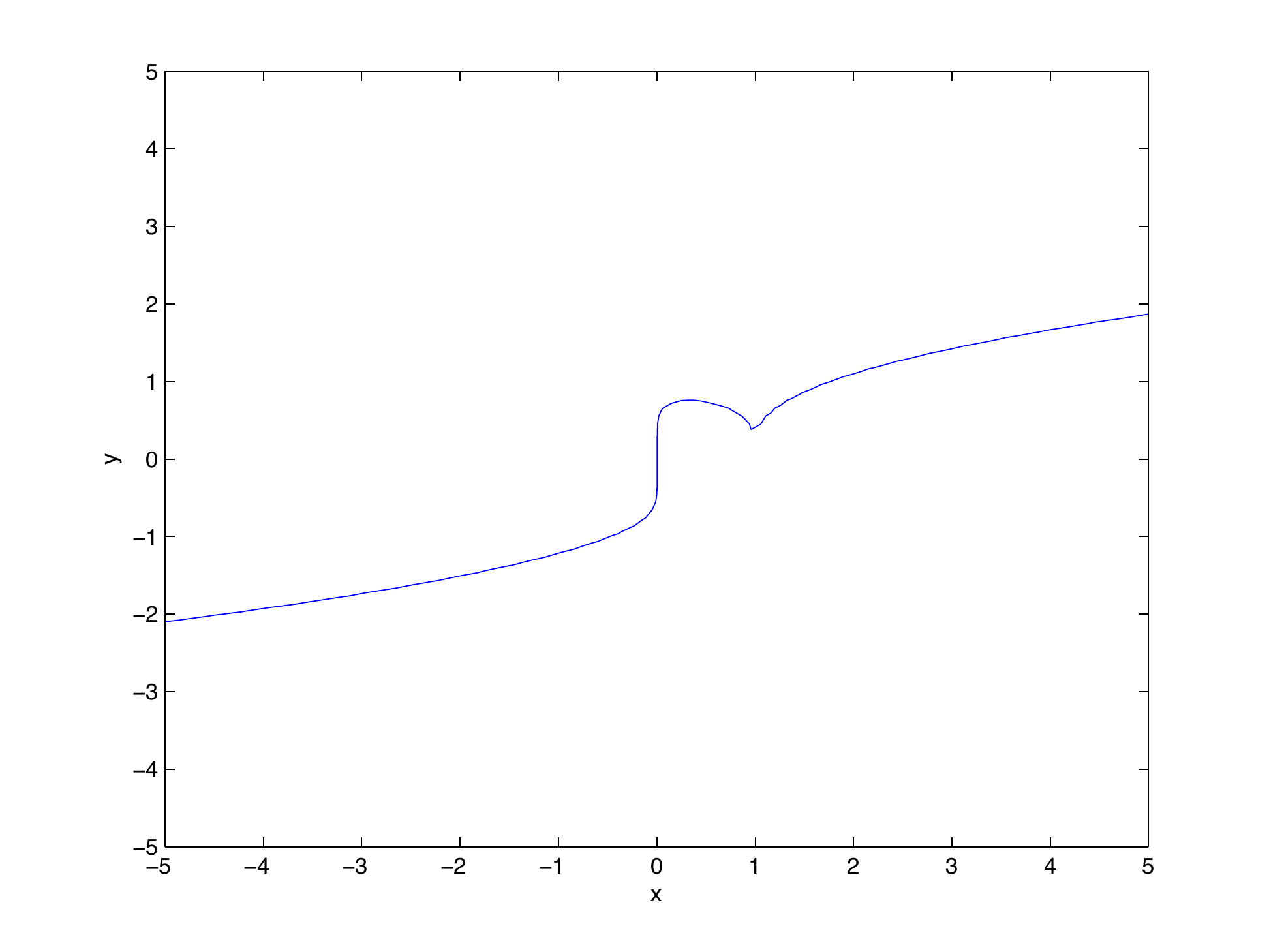}
\end{center}
 \caption{Real variety of the Klein curve (\ref{klein}), the curve 
 with the maximal number  of automorphisms in  genus 3.}
 \label{figklein}
\end{figure}

The Fermat curve 
\begin{equation}
    y^{n}+x^{n}+1=0
    \label{fermat}
\end{equation}
has for $n=4$ the topological type $(3,0,1)$. This is confirmed by the 
algorithm. For the periods 
\begin{verbatim}
aper =

   0.9270 + 0.0000i   0.0000 - 0.9270i   0.0000 - 0.9270i
  -0.0000 + 0.0000i   0.0000 + 0.0000i   0.0000 - 1.8541i
        0 + 0.9270i  -0.9270 + 0.0000i   0.0000 - 0.9270i

bper =

   0.9270 + 0.9270i   0.9270 - 0.9270i   0.0000 + 0.0000i
   0.0000            -0.9270 + 0.9270i   0.9270 - 0.9270i
  -0.9270 + 0.0000i   0.0000 - 0.9270i   0.0000 - 0.9270i
    
\end{verbatim}
we find
\begin{verbatim}
H =

     0     1     0
     1     0     0
     0     0     0


Q =

     1     0     0
     0     0     1
     0     1     0
\end{verbatim}
in accordance with the expectation.
For $n=5$ the curve has genus 6. We find with 
\begin{verbatim}
aper =

  Columns 1 through 4

  -0.1623 + 0.4995i  -0.1890 + 0.5817i  -0.1623 + 0.4995i   0.4948 + 0.3595i
  -0.3246 - 0.0000i  -0.3780 + 0.0000i  -0.3246 + 0.0000i   0.9896 - 0.0000i
  -0.4249 + 0.3087i  -0.0722 - 0.2222i   0.5252 - 0.0000i   0.6116 - 0.0000i
  -0.4249 - 0.3087i   0.1890 - 0.1373i   0.1623 + 0.4995i  -0.4948 + 0.3595i
   0.1003 - 0.3087i  -0.3058 + 0.2222i   0.3246            -0.9896 - 0.0000i
   0.5252 + 0.0000i  -0.4948 - 0.3595i   0.1623 + 0.4995i  -0.4948 + 0.3595i

  Columns 5 through 6

   0.4948 + 0.3595i   0.4249 - 0.3087i
   0.9896 - 0.0000i   0.8498 - 0.0000i
   0.1890 - 0.5817i   1.1125 + 0.8082i
   0.1890 + 0.5817i  -0.4249 - 1.3078i
   0.8006 + 0.5817i  -0.2626 - 0.8082i
   0.6116 - 0.0000i   0.1623 - 0.4995i

bper =

  Columns 1 through 4

  -0.6875 + 0.4995i  -0.8006 + 0.5817i  -0.6875 + 0.4995i  -0.1168 + 0.3595i
   0.1003 - 0.3087i   0.1168 - 0.3595i   0.1003 - 0.3087i   0.8006 + 0.5817i
  -0.6875 - 0.4995i   0.3058 - 0.2222i   0.2626 + 0.8082i   0.3058 - 0.2222i
  -0.1623 - 0.4995i   0.2336 + 0.0000i  -0.1623 + 0.4995i   0.4948 + 0.3595i
   0.4249 - 0.3087i  -0.6116 - 0.0000i   0.4249 + 0.3087i  -0.1890 - 0.5817i
  -0.1623 + 0.4995i   0.4948 - 0.3595i  -0.5252            -0.6116          

  Columns 5 through 6

  -0.1168 + 0.3595i  -0.1003 - 0.3087i
   0.8006 + 0.5817i   0.6875 - 0.4995i
  -0.1168 - 0.3595i   0.2626 + 0.8082i
   0.4948 - 0.3595i   1.3751 - 0.0000i
  -0.1890 + 0.5817i  -0.5252 + 0.0000i
   0.4948 + 0.3595i  -0.1623 - 0.4995i    
\end{verbatim}
the matrices
\begin{verbatim}
H =

     1     0     0     0     0     0
     0     1     0     0     0     0
     0     0     1     0     0     0
     0     0     0     1     0     0
     0     0     0     0     1     0
     0     0     0     0     0     1


Q =

     1     0     0     0     0     0
     1     0     0     1     0     0
     0     0     0     0     1     0
     0     0     1     0     1     0
     0     1     1     0     1     0
     0     0     1     0     1     1.
\end{verbatim}
This implies that there is one real oval as can be also seen in 
Fig.~\ref{figfermat}, and the curve is non-dividing. This corresponds 
to $(g,k,a)=(6,1,1)$. 
\begin{figure}[htb!]
\begin{center}
\includegraphics[width=0.6\textwidth]{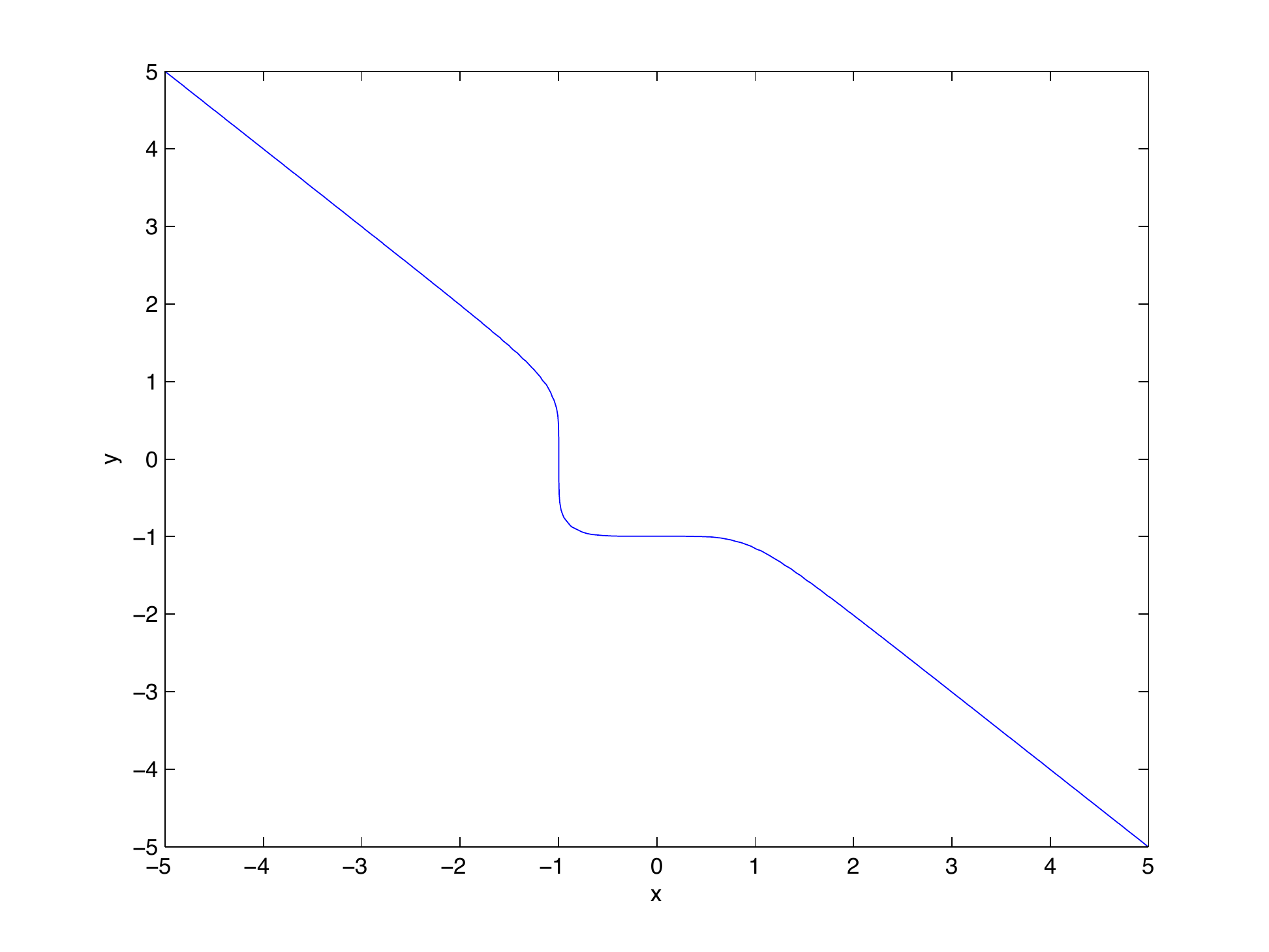}
\end{center}
 \caption{Real variety of the Fermat curve (\ref{fermat}) for $n=5$.}
 \label{figfermat}
\end{figure}

For the curve 
\begin{equation}
    y^3-2x^3y-x^9=0
    \label{x9}
\end{equation}
of genus 3 we have 
\begin{verbatim}
aper =

   0.4021 - 0.6964i  -0.6748 - 1.1688i   0.5985 + 1.0367i
  -0.4764 - 0.9006i   1.5026 - 0.1823i   2.0418 + 0.5711i
  -1.5598 - 0.9006i   0.3157 - 0.1823i  -0.9892 + 0.5711i

bper =

   1.1577 + 0.2041i   0.3591 - 0.9865i   0.3907 + 0.4656i
   0.5417 - 0.3128i   0.5934 + 0.3426i   1.5155 + 0.8750i
   0.6160 + 0.1086i  -0.2344 + 0.6439i  -1.1249 - 1.3406i
    
\end{verbatim}
which leads to 
\begin{verbatim}
H =

     1     0     0
     0     1     0
     0     0     1


Q =

     1     1     1
     0     0     1
     0     1     0.    
\end{verbatim}
This gives the topological type $(3,1,1)$. In particular, the number 
of real ovals equals one.  The real variety of the curve, which has a 
self intersection and a cusp, can be seen in  Fig.~\ref{figx9}. 
\begin{figure}[htb!]
\begin{center}
\includegraphics[width=0.6\textwidth]{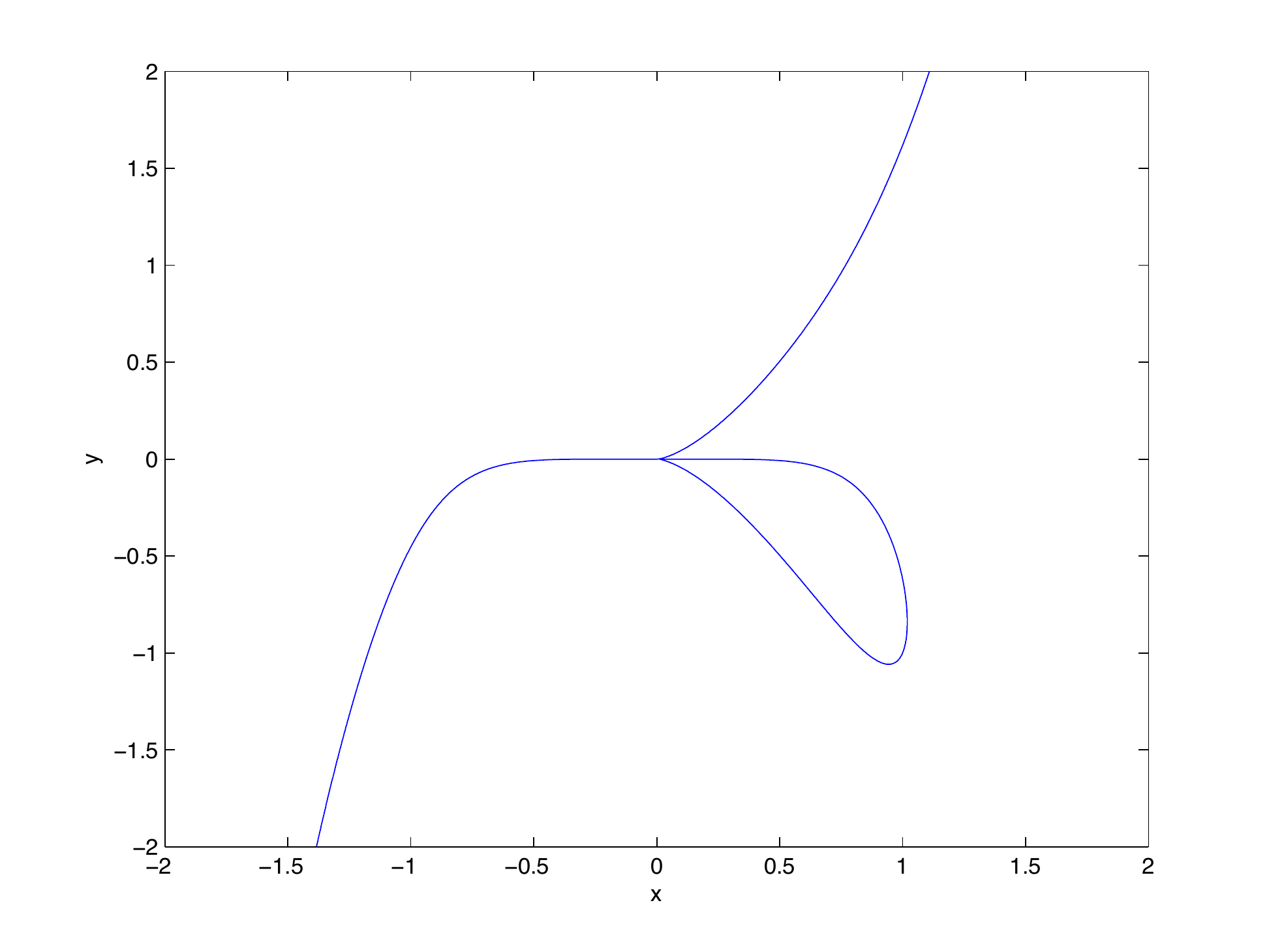}
\end{center}
 \caption{Real variety of the curve (\ref{x9}).}
 \label{figx9}
\end{figure}

The curve 
\begin{equation}
    -180x^5+396yx^4-307x^3y^2+107x^2y^3+273x^3-318x^2y-17xy^4+117xy^2-68x+y^5-12y^3+19y=0
    \label{dividing}
\end{equation}
is known (see \cite{Dubm}) to be a dividing curve of genus 6. In 
fact we get for 
\begin{verbatim}
aper =

  Columns 1 through 4

   0.0414 + 0.0278i  -0.0345 + 0.0272i  -0.0979 + 0.0264i  -0.3041 + 0.0603i
  -0.1149 - 0.0446i   0.0000 - 0.0000i   0.0532 - 0.0226i   0.0000 - 0.0000i
   0.1149 + 0.0169i  -0.0000 + 0.0544i  -0.0532 - 0.0805i  -0.0000 + 0.1206i
   0.0000 - 0.0111i   0.0000 + 0.0183i   0.0000 - 0.0303i   0.0000 + 0.1114i
   0.0820 - 0.0278i   0.0000 + 0.0000i  -0.0527 - 0.1031i   0.0000 + 0.0000i
  -0.0820 - 0.0000i  -0.0000 - 0.0544i   0.0527 - 0.0000i  -0.0000 - 0.1206i

  Columns 5 through 6

  -0.5369 + 0.0872i  -2.8149 + 0.5433i
   0.2427 + 0.0099i   0.7641 + 0.0357i
  -0.2427 - 0.2372i  -0.7641 - 0.5381i
   0.0000 - 0.1843i   0.0000 - 1.1224i
  -0.0881 - 0.2274i  -0.1275 - 0.5024i
   0.0881 - 0.0000i   0.1275 - 0.0000i

bper =

  Columns 1 through 4

   0.0414 + 0.0278i  -0.0345 - 0.0094i  -0.0979 + 0.0264i  -0.3041 - 0.1626i
  -0.0089 - 0.1009i  -0.0666 + 0.0180i   0.1091 - 0.0733i  -0.1162 + 0.0046i
  -0.0089 - 0.0563i  -0.0666 - 0.0180i   0.1091 - 0.0507i  -0.1162 - 0.0046i
   0.0320 - 0.0000i   0.0000 - 0.0183i   0.1425 - 0.0000i   0.0000 - 0.1114i
   0.1060 + 0.0286i   0.0666 + 0.0724i   0.0559 - 0.0525i   0.1162 + 0.1252i
  -0.0580 + 0.0160i   0.0666 - 0.0363i   0.1614 + 0.0751i   0.1162 - 0.1160i

  Columns 5 through 6

  -0.5369 + 0.0872i  -2.8149 + 0.5433i
   0.3380 - 0.1055i   0.9555 - 0.2619i
   0.3380 - 0.1154i   0.9555 - 0.2976i
   0.8311 - 0.0000i   4.8657 - 0.0000i
   0.0954 - 0.1120i   0.1914 - 0.2048i
   0.2716 + 0.1021i   0.4464 + 0.1691i
\end{verbatim}
the matrices
\begin{verbatim}
    H =
    
         0     1     0     0     0     0
         1     0     0     0     0     0
         0     0     0     1     0     0
         0     0     1     0     0     0
         0     0     0     0     0     0
         0     0     0     0     0     0
    
    
    Q =
    
         1     0     0     0     0     0
         0     1     0     0     0     0
         1     1     1     0     0     0
         0     1     0     0     1     0
         0     0     0     1     0     0
         0     0     0     0     1     1.
\end{verbatim}
This means that it is a dividing curve with 3 real ovals, which is, 
however, not obvious from Fig.~\ref{figdivid}. The 
topological type is $(6,3,0)$. 
\begin{figure}[htb!]
\begin{center}
\includegraphics[width=0.6\textwidth]{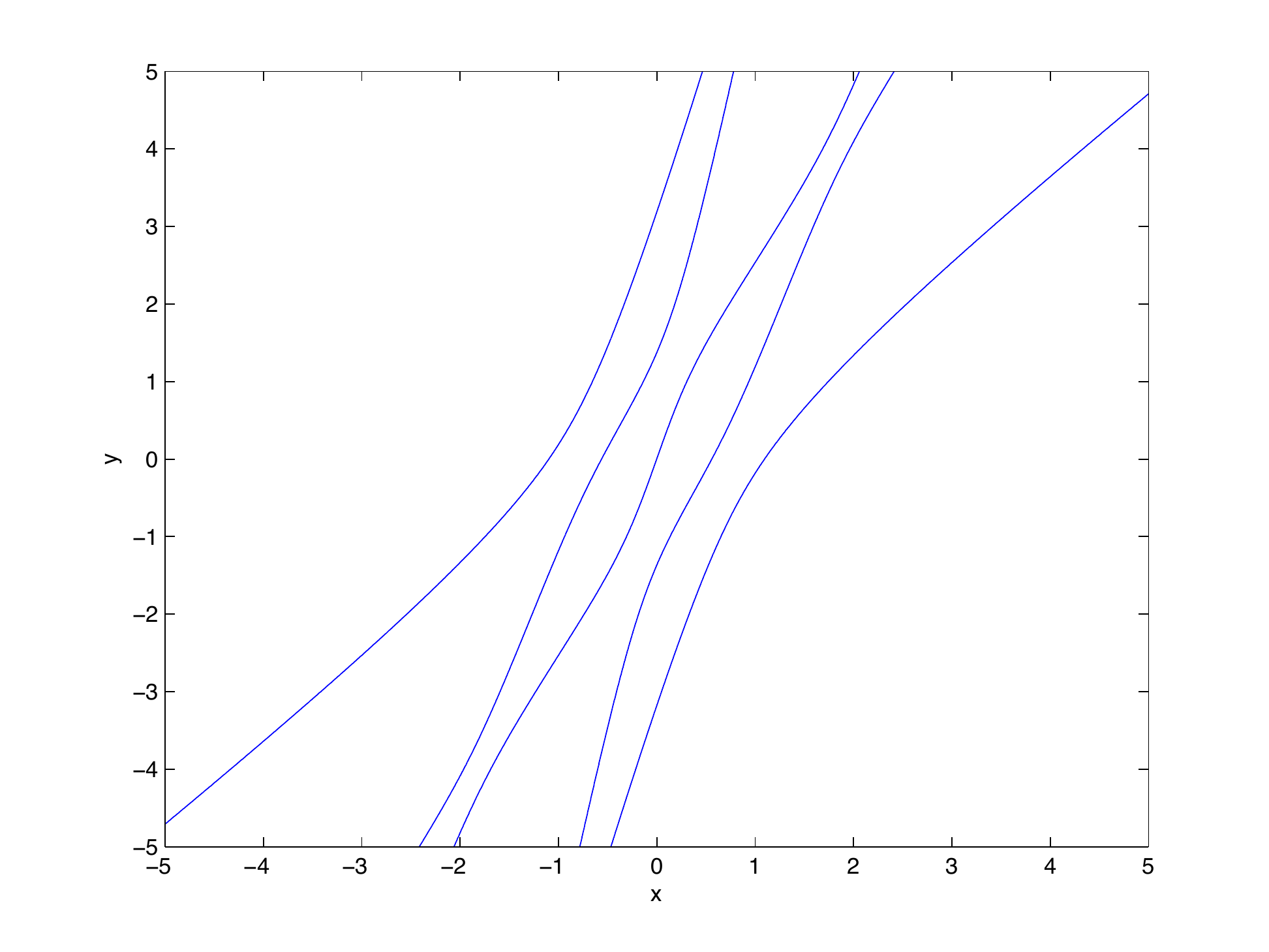}
\end{center}
\caption{Real variety of the curve (\ref{dividing}).}
 \label{figdivid}
\end{figure}


\section{Outlook}
The algorithm presented in this paper allows the transformation of an 
arbitrary canonical homology basis to a form adapted to the 
underlying symmetry, here the anti-holomorphic involution. This 
permits to find a basis satisfying relation (\ref{hom basis}). A 
similar condition can be imposed for any involution,
and the algorithm presented here can be easily adapted 
to that case. 

In general, the presence of symmetries allows to significantly 
simplify the Riemann matrix of a surface, but only in a homology 
basis adapted to the symmetries, for instance in a basis such that 
the $A$-cycles are invariant under symmetry operations, see 
\cite{BBEIM} and  \cite{braden} for the Klein curve. The latter 
reference uses an approach to this 
problem based on Comessati's theorem \cite{Com} via two pieces of 
software,     \emph{ extcurves}
 and \emph{CyclePainter}\footnote{Located at http://gitorious.org/riemanncycles.}. It will be the subject of further work to 
generalize the algorithm to symmetry groups beyond involutions.  
In a first step it would be interesting to extend 
the approach presented in this paper to automorphisms $\tau$ satisfying 
$\tau^{n}=id$ with $n>2$.

\end{document}